\theoremstyle{definition}
\newtheorem{definition}{Definition}[section]
\newtheorem{remark}[definition]{Remark}
\theoremstyle{plain}
\newtheorem{theorem}[definition]{Theorem}
\newtheorem{proposition}[definition]{Proposition}
\newtheorem{lemma}[definition]{Lemma}
\newtheorem{corollary}[definition]{Corollary}
\numberwithin{equation}{section}
\title[Distortion between mapping class groups]{The mapping class group of a nonorientable
surface is quasi-isometrically embedded in the mapping class group of the orientation double
cover}
\author[T.~Katayama]{Takuya Katayama}
\address{
(Takuya Katayama)
Department of Mathematics,
Faculty of Science,
Gakushuin University,
1-5-1 Mejiro, Toshima-ku, Tokyo 171-8588, Japan
}
\email{katayama@math.gakushuin.ac.jp}
\author[E.~Kuno]{Erika Kuno}
\address{
(Erika Kuno)
Department of Mathematics,
Graduate School of Science,
Osaka University,
1-1 Machikaneyama-cho Toyonaka, Osaka 560-0043, Japan
}
\email{e-kuno@math.sci.osaka-u.ac.jp}
\date{\today}
\keywords{Mapping class group; symmetric mapping class group; nonorientable surface;
semihyperbolicity; subgroup distortion}
\subjclass[2020]{20F65, 20F67, 57K20}
\begin{document}
\begin{abstract}
Let $N$ be a connected nonorientable surface with or without boundary and punctures, and
$j\colon S\rightarrow N$ be the orientation double covering. 
It has previously been proved that the orientation double covering $j$ induces an embedding $\iota\colon\mathrm{Mod}(N)$ $\hookrightarrow$ $\mathrm{Mod}(S)$ with one exception. 
In this paper, we prove that this injective homomorphism $\iota$ is a quasi-isometric embedding. 
The proof is based on the semihyperbolicity of $\mathrm{Mod}(S)$, which has already been
established. 
We also prove that the embedding $\mathrm{Mod}(F') \hookrightarrow \mathrm{Mod}(F)$
induced by an inclusion of a pair of possibly nonorientable surfaces $F' \subset F$ is a quasi-isometric embedding. 
\end{abstract}
\maketitle
%%%%%%%%%%%%%%%%%%%%%
%%%%%Introduction%%%%%%
%%%%%%%%%%%%%%%%%%%%%
\section{Introduction}\label{Introduction}
Let $S=S_{g,p}^{b}$ be the compact connected orientable surface of genus $g$ with $b$
boundary components and $p$ punctures, and $N=N_{g,p}^{b}$ be the compact connected
nonorientable surface of genus $g$ with $b$ boundary components and $p$ punctures.
In the case where $b=0$ or $p=0$, we drop the suffix that denotes $0$, excepting $g$, from
$S_{g, p}^{b}$ and $N_{g, p}^{b}$. 
For example, $N_{g, 0}^{0}$ is simply denoted as $N_{g}$. 
If we are not interested in whether a given surface is orientable or not, we denote the surface by $F$. 
The {\it mapping class group} $\mathrm{Mod}(F)$ of $F$ is the group of isotopy classes of
homeomorphisms on $F$ which are orientation-preserving if $F$ is orientable and preserve
$\partial F$ pointwise. 
Recall that if $H \subset G$ is a pair of finitely generated groups with word metrics $d_H$ and $d_G$ (induced by finite generating sets), respectively, then the {\it distortion} of $H$ in $G$ is defined as
$$ \delta_{H}^{G}(n) := \mathrm{max} \{ d_H(1, h) \mid h \in H \ \mbox{with} \ d_{G}(1,
h) \leq n \} .$$ 
This function is independent of the choice of word metrics $d_H$ and $d_G$ up to Lipschitz
equivalence. 
In addition, there exists a constant $K$ such that $\delta_{H}^{G}(n) \leq Kn$ if and only if the inclusion $H \subset G$ is a quasi-isometric embedding. 
The subgroup $H$ is said to be {\it undistorted} in $G$ if this condition is satisfied;
otherwise, we say that $H$ is {\it distorted}. 
The distortions of various subgroups in the mapping class groups of orientable surfaces have been extensively investigated. 
For example, the mapping class groups of subsurfaces are undistorted according to Masur--
Minsky~\cite[Theorem 6.12]{Masur-Minsky00} and Hamenst\"{a}dt~\cite[Proposition
4.1]{Hamenstad09b}. 
Farb--Lubotzky--Minsky~\cite{Farb-Lubotzky-Minsky01} proved that groups generated by
Dehn twists about disjoint curves are undistorted. 
Moreover, Rafi--Schleimer~\cite{Rafi-Schleimer09} proved that an orbifold covering map of
orientable surfaces induces a quasi-isometric embedding between the mapping class groups. 
For examples of distorted subgroups of mapping class groups, see Broaddus--Farb--Putman~\cite{Broaddus-Farb-Putman11}, Cohen~\cite{Cohen14}, and Kuno--Omori~\cite{Kuno-Omori17}, where it is proved that the Torelli group $\mathcal{I}_{g}^{b}$ is distorted in $\mathrm{Mod}(S_{g}^{b})$. 
Moreover, it has been proved by Hamenst\"{a}dt--Hensel~\cite{Hamenstad-Hensel12} that
the handlebody group is exponentially distorted in the mapping class group of the boundary
surface. 

Birman--Chillingworth~\cite[Theorem 1]{Birman-Chillingworth72} (for closed surfaces),
Szepietowski~\cite[Lemma 3]{Szepietowski10} (for surfaces with boundaries), and
Gon\c{c}alves--Guaschi--Maldonado~\cite[Theorem 1.1]{Goncalves-Guaschi-Maldonado18}
(for surfaces with punctures) proved that the mapping class group of a nonorientable surface
$N_{g, p}^{b}$ is a subgroup of the mapping class group of the orientation double cover
$S_{g-1, 2p}^{2b}$ (we will describe the induced injective homomorphism in
Section~\ref{Nonorientable surface mapping class groups are undistorted}). 
In this paper, we prove the following theorem using the semihyperbolicity of the mapping
class group of orientable surfaces, independently established by Durham--Minsky--Sisto~
\cite[Corollary D]{Durham-Minsky-Sisto20} and Haettel--Hoda--Petyt~\cite[Corollary
3.11]{Haettel-Hoda-Petyt20}. 

\begin{theorem}\label{first_thm}
For all but $(g, p, b) = (2, 0, 0)$, the mapping class group $\mathrm{Mod}(N_{g,p}^{b})$ is undistorted in the mapping class group $\mathrm{Mod}(S_{g-1,2p}^{2b})$. 
\end{theorem}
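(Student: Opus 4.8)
The plan is to exploit the semihyperbolicity of $\mathrm{Mod}(S_{g-1,2p}^{2b})$ together with the fact that $\iota(\mathrm{Mod}(N_{g,p}^{b}))$ is precisely the centralizer (or a finite-index subgroup thereof) of the deck transformation $\sigma$ of the orientation double covering $j$. Indeed, a mapping class of $S$ descends to $N$ if and only if it commutes with the free, orientation-reversing involution $\sigma$; this is the classical identification behind the Birman--Chillingworth / Szepietowski / Gon\c{c}alves--Guaschi--Maldonado results cited above. So the subgroup in question is the *symmetric mapping class group* $\mathrm{SMod}_\sigma(S)$, and the task reduces to showing that centralizers of finite-order elements are undistorted in $\mathrm{Mod}(S)$.

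The key tool is the following general principle about semihyperbolic groups: if $G$ is semihyperbolic and $g \in G$ has finite order (or more generally generates a finite subgroup), then the centralizer $C_G(g)$ is semihyperbolic and *undistorted* in $G$. This follows from the standard "fellow-traveling of bicombings" argument — given $h \in C_G(g)$, one takes the combing path from $1$ to $h$ and averages it over the finite group $\langle g\rangle$ acting by translation, producing a path that stays a bounded distance from the original, lies (coarsely) in $C_G(g)$, and has length comparable to $d_G(1,h)$; this is essentially the argument of Alonso--Bridson for biautomatic groups adapted to the semihyperbolic setting, and it is exactly how Durham--Minsky--Sisto and Haettel--Hoda--Petyt's results get leveraged. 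First I would isolate and state this centralizer principle as a lemma (citing the semihyperbolicity references and the bicombing-averaging technique), being careful about the hypotheses needed — one wants $\langle g \rangle$ finite, which holds here since $\sigma$ is an involution.

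Then the proof proper proceeds: (1) recall from the cited literature that $\iota(\mathrm{Mod}(N_{g,p}^{b}))$ has finite index in $C_{\mathrm{Mod}(S)}(\sigma)$ (finite index, not necessarily equality, because of subtleties with boundary and the hyperelliptic-type phenomena — this is where the $(2,0,0)$ exception enters, since there $\iota$ fails to be injective); (2) invoke the centralizer lemma to conclude $C_{\mathrm{Mod}(S)}(\sigma)$ is undistorted; (3) observe that a finite-index subgroup is always undistorted in the ambient group, and undistortedness is transitive, so $\iota(\mathrm{Mod}(N_{g,p}^{b}))$ is undistorted; (4) finally, note that $\iota$ is injective away from the exceptional case, so $\mathrm{Mod}(N_{g,p}^{b}) \cong \iota(\mathrm{Mod}(N_{g,p}^{b}))$ is undistorted in $\mathrm{Mod}(S_{g-1,2p}^{2b})$, giving a quasi-isometric embedding by the criterion recalled in the introduction.

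The main obstacle I anticipate is making the centralizer principle genuinely rigorous in the semihyperbolic category: semihyperbolicity (unlike biautomaticity) is a purely geometric condition on a bounded bicombing, so the averaging trick must be carried out at the level of quasigeodesic combing paths with explicit control of the fellow-traveling constants, and one must check that the averaged path's endpoints and its coarse containment in $C_G(\sigma)$ behave correctly. A secondary technical point is pinning down exactly why $\iota$ has finite-index image in the centralizer across all the cases (closed, boundary, punctured), and confirming that $(2,0,0)$ is the only genuine exception — here I would lean directly on the Birman--Chillingworth, Szepietowski, and Gon\c{c}alves--Guaschi--Maldonado statements rather than reprove them.
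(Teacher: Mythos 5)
Your proposal follows essentially the same route as the paper: identify the image of $\iota$ with a finite-index subgroup of the centralizer of the deck involution, show centralizers in a semihyperbolic group are undistorted, and conclude by transitivity of undistortion through finite-index subgroups. One point needs more care than your write-up gives it, and it is exactly the pivot of the paper's argument: the deck transformation $J$ is orientation-\emph{reversing}, so $[J]$ does not lie in $\mathrm{Mod}(S_{g-1,2p}^{2b})$ and the expression $C_{\mathrm{Mod}(S)}(\sigma)$ is not literally a centralizer of an element of the group to which your centralizer lemma applies. The paper resolves this by running the whole argument in the extended mapping class group $\mathrm{Mod}^{\pm}(S)$ (also semihyperbolic by the same references), where $\mathrm{Mod}(N)$ sits as an index-two subgroup of $Z([J])$, and then composing with the quasi-isometry $\mathrm{Mod}^{\pm}(S)\to\mathrm{Mod}(S)$ coming from the index-two inclusion. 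Also, your centralizer principle does not need $\langle g\rangle$ finite and no averaging is required: the paper uses Short's argument, which for any element $a$ uses only the bounded equivariant bicombing to push each point $\sigma(g)(t)$ of a combing line a uniformly bounded distance into $Z(a)$, because the conjugates $\sigma(g)(t)^{-1}a\,\sigma(g)(t)$ range over a bounded set. With the extended-mapping-class-group detour inserted, your outline matches the paper's proof.
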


We note that $\mathrm{Mod}(N_2)$ cannot be embedded in $\mathrm{Mod}(S_1)$ (see Remark~\ref{Klein_bottle_case}). 

The remainder of this paper is organized as follows. 
In Section~\ref{Preliminaries}, we first review the definition of ``semihyperbolic" groups, which captures the feature of CAT(0) groups in purely combinatorial group-theoretic terms. 
We then prepare Lemma~\ref{centralizer_of_mcg_is_undistorted} on the mapping class
groups of orientable surfaces, which is used to prove Theorem~\ref{first_thm}. 
Section~\ref{Nonorientable surface mapping class groups are undistorted} is devoted to proving Theorem~\ref{first_thm}. 
In this section, we also obtain the result that the mapping class groups of nonorientable
surfaces are semihyperbolic. 
Finally, in the appendix, we show that the injective homomorphism between the mapping
class groups, which comes from the inclusion of surfaces, is a quasi-isometric embedding
(Proposition~\ref{qie_nonori_inc}). 

%%%%%%%%%%%%%%%%%%%
%%%Acknowledgements%%%%%%
%%%%%%%%%%%%%%%%%%%
{\bf Acknowledgements:} The authors wish to express their great appreciation to B\l a\.{z}ej Szepietowski for encouraging the second author to decide whether orientation double
coverings induce quasi-isometric embeddings when she visited his office in 2017. 
Moreover, he pointed out a few mistakes in the proofs of the main results and introduced the authors to Lemma~\ref{injective_homomorphism_induced_by_orientation_double_cover}. 
His suggestions have considerably improved the proofs and the main results of this paper. 
The authors express their gratitude to Harry Petyt for variable comments on the semihyperbolicity of extended mapping class groups. 
The authors are also deeply grateful to Martin Bridson and Saul Schleimer for answering their questions, and to Makoto Sakuma for his comments on an early version of this paper. 
The first author was supported by JSPS KAKENHI through grant number 20J1431, and the
second author was supported by JST, ACT-X, through grant number JPMJAX200D. 

%%%%%%%%%%%%%%%%%%%%%
%%%%%Preliminaries%%%%%%
%%%%%%%%%%%%%%%%%%%%%
\section{Preliminaries}\label{Preliminaries}
In this section, we show that the centralizer of every element in the (extended) mapping class group of an orientable surface is quasi-isometrically embedded in the (extended) mapping class group. 
Following Alonso--Bridson~\cite{Alonso-Bridson95}, we recall the definition of
``semihyperbolicity" for finitely generated groups. 
Throughout this section, we assume that $G$ is a finitely generated group and $X$ is a finite generating set of $G$. 
We write $X^{-1}=\{x^{-1}|x\in X\}$ for a set of formal inverses of $X$.
Set $\mathcal{A}\coloneqq X\sqcup X^{-1}$. 
We consider the free monoid $\mathcal{A}^{*}$ which consists of all finite words on
$\mathcal{A}$. 
The free group over $\mathcal{A}$ is naturally contained in $\mathcal{A}^{*}$, and the
natural projection $\mu \colon \mathcal{A}^{*}\rightarrow G$ is well-defined.
We denote the length of a word $w\in\mathcal{A}^{*}$ by $\ell(w)$.
For $g\in G$, we write 
\begin{equation*}
\|g\|_{(G,X)}=\mathrm{min}\{\ell(w)|w\in\mathcal{A}^{*}, \mu(w)=g\}.
\end{equation*}
Let $\mathrm{Cay}(G, X)$ be the Cayley graph of $G$ with respect to a generating set $X$.
Each vertex of $\mathrm{Cay}(G,X)$ corresponds to an element $g\in G$.
An edge of $\mathrm{Cay}(G,X)$ corresponds to $\mu(x)$ for some $x\in X$, and it is
oriented from $g$ to $g\mu(x)$ for each $g\in G$. 
We consider $\mathrm{Cay}(G, X)$ as a metric space by assigning a length of 1 to each edge. 
For $g,h\in G$, the distance in $\mathrm{Cay}(G, X)$ is $d(g,h)=\|g^{-1}h\|_{(G,X)}$. 
By this definition of the distance, the group $G$ acts by isometry on $\mathrm{Cay}(G,X)$ via the action $(g, x) \mapsto gx$. 
Let $\mathcal{P}(\mathrm{Cay}(G,X))$ be the set of all discrete paths which connect a pair
of vertices in $\mathrm{Cay}(G,X)$, and let $e \colon \mathcal{P}(\mathrm{Cay}(G,X))
\rightarrow \mathrm{Cay}(G,X) \times \mathrm{Cay}(G,X)$ be the map which sends a
discrete path $p$ of length $N$ to $(p(0), p(N))$. 

\begin{definition}
A {\it bicombing} $\sigma \colon \mathrm{Cay}(G,X) \times \mathrm{Cay}(G,X)
\rightarrow \mathcal{P}(\mathrm{Cay}(G,X))$ is a set-theoretic section for $e$. 
Namely, $\sigma$ satisfies $e \circ \sigma = \mathrm{id}$. 
A bicombing $\sigma$ is said to be {\it quasi-geodesic} if there exist $\lambda, 
\varepsilon \geq 0$ such that the image of $\sigma$ consists only of $(\lambda, \varepsilon)$-quasi-geodesics. 
Additionally, a bicombing $\sigma$ is said to be {\it bounded} if there exist $k_1 \geq 1, k_2 \geq 0$ such that, for all $x, y, x',y' \in \mathrm{Cay}(G,X)$ and $t \in \mathbb{N}$,
$$ d_{G}(\sigma(x, y)(t) , \sigma(x', y')(t)) \leq k_1 \mathrm{max} \{ d_{G}(x, y), d_{G}(x', y') \} + k_2 .$$
We say that a finitely generated group $G$ is {\it weakly semihyperbolic} if $\mathrm{Cay}(G,X)$ admits a bounded quasi-geodesic bicombing. 
For elements $g, h \in G$, the image $\sigma(g, h)$ is called a {\it combing line} from $g$ to $h$. 
In addition, the combing line $\sigma(1, g)$ is denoted simply by $\sigma(g)$. 
\end{definition}

Alonso--Bridson proved~\cite{Alonso-Bridson95} that being a weakly semihyperbolic group
is a quasi-isometric invariant for finitely generated groups.
Therefore, the definition of weak semihyperbolicity for finitely generated groups is
independent of the choice of a generating set. 

\begin{definition}
A bicombing $\sigma$ for a finitely generated group $G$ is said to be {\it equivariant} if
$\sigma$ satisfies the following: for all $g \in G$ and $x, y \in \mathrm{Cay}(G,X)$,
$$ g \cdot \sigma(x, y) = \sigma(gx, gy). $$
Here, $g \cdot \sigma(x, y)$ is the image of the discrete path $\sigma(x, y)$ under the left action of $G$ on $\mathrm{Cay}(G,X)$ and $gx, gy$ are multiplications on the group $G$. 
A quasi-geodesic, bounded, and equivariant bicombing for a finitely generated group is called a {\it semihyperbolic structure}. 
A finitely generated group is said to be {\it semihyperbolic} if it admits a semihyperbolic structure.
\end{definition}

Though the authors do not know whether semihyperbolicity for finitely generated groups is a quasi-isometric invariant, the definition of semihyperbolicity is known to be independent of the choice of a generating set \cite[Corollary 4.2]{Alonso-Bridson95}. 

\begin{definition}
Let $G$ be a group with a bicombing $\sigma$. 
Then, a subgroup $H \leq G$ is said to be $\sigma$-quasi-convex if there exists a constant $k\geq 0$ such that $d(\sigma(h)(t), H)\leq k$ for all $h\in H$ and $t\in\mathbb{N}$, where $\sigma(h)$ is the combing line from $1$ to $h$. 
\end{definition}

An important example of a $\sigma$-quasi-convex subgroup is a finite-index subgroup.
A finite-index subgroup is $\sigma$-quasi-convex because it is dense in the entire group.
This fact will be frequently used throughout this paper.

In a semihyperbolic group, every $\sigma$-quasi-convex subgroup is quasi-isometrically
embedded and inherits the semihyperbolicity. 

\begin{lemma}{\rm(}{\cite[Lemma 7.2 and Theorem 7.3]{Alonso-Bridson95}}{\rm)}\label{qconvex_qiemb}
Let $G$ be a finitely generated group with a bicombing $\sigma$ and let $H$ be a $\sigma$-
quasi-convex subgroup of $G$.
Then, the following holds.
\begin{enumerate}
 \item[(1)] If $\sigma$ is quasi-geodesic, then $H$ is finitely generated and quasi-isometrically embedded in $G$.
 \item[(2)] If $\sigma$ is a semihyperbolic structure for $G$, then $H$ is semihyperbolic.
\end{enumerate}
\end{lemma}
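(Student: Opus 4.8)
The plan is to realise a $\sigma$-quasi-convex subgroup $H\le G$ as a quasi-isometrically embedded copy by producing a suitable finite generating set of $H$, and then, for part (2), to transport $\sigma$ to a semihyperbolic structure on $H$ by pushing its combing lines into $H$ through an equivariant projection.

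For part (1), fix quasi-geodesic constants $\lambda,\varepsilon$ for $\sigma$ and a quasi-convexity constant $k$ for $H$. Given $h\in H$, the combing line $\sigma(h)=\sigma(1,h)$ is a $(\lambda,\varepsilon)$-quasi-geodesic path of some length $N$ with $\sigma(h)(0)=1$, $\sigma(h)(N)=h$, so $N\le\lambda\|h\|_{(G,X)}+\lambda\varepsilon$ by the quasi-geodesic inequality. For each $t$ choose $h_t\in H$ with $d(\sigma(h)(t),h_t)\le k$, taking $h_0=1$ and $h_N=h$; since consecutive vertices of a discrete path are at distance at most $1$, the elements $h_t^{-1}h_{t+1}$ all lie in $B:=\{b\in H:\|b\|_{(G,X)}\le 2k+1\}$, which is finite because $X$ is. The factorisation $h=(h_0^{-1}h_1)(h_1^{-1}h_2)\cdots(h_{N-1}^{-1}h_N)$ shows that $B$ generates $H$ and that $\|h\|_{(H,B)}\le N\le\lambda\|h\|_{(G,X)}+\lambda\varepsilon$, while $\|h\|_{(G,X)}\le(2k+1)\|h\|_{(H,B)}$ is immediate; hence $H$ is finitely generated and the inclusion is a quasi-isometric embedding, independently of the finite generating sets chosen.

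For part (2) the bicombing $\sigma$ is moreover bounded and equivariant, and the key construction is an $H$-equivariant map $\pi\colon G\to H$ with $\pi|_H=\mathrm{id}$ and $d(g,\pi(g))=d(g,H)$ for every $g$. To build it, partition $G=\bigsqcup_i Hg_i$ into the orbits of left multiplication by $H$, with $g_0=1$; for each $i$ pick $\pi(g_i)\in H$ realising the (attained, integer-valued) distance $d(g_i,H)$, with $\pi(1)=1$, and set $\pi(hg_i):=h\,\pi(g_i)$. Left-invariance of $d$, together with the fact that $d(\cdot,H)$ is constant on each orbit $Hg_i$, makes this well defined and gives the stated properties. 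By equivariance of $\sigma$, for $h,h'\in H$ the combing line $\sigma(h,h')=h\cdot\sigma(h^{-1}h')$ stays within $k$ of $H$, so along it $d(\sigma(h,h')(t),\pi(\sigma(h,h')(t)))\le k$. Now define $\sigma_H(h,h'):=\pi\circ\sigma(h,h')$; its consecutive vertices move by at most $2k+1$ in $d_{(G,X)}$ and lie in $H$, hence it is a genuine discrete path in $\mathrm{Cay}(H,B)$ from $h$ to $h'$, so $\sigma_H$ is a bicombing.

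It then remains to verify the three properties of $\sigma_H$, and the estimates from part (1) do all the work. Equivariance is immediate from $\pi(h''g)=h''\pi(g)$ and equivariance of $\sigma$. For the quasi-geodesic property, the length bound $N\le\lambda\,d_{(G,X)}(h,h')+\lambda\varepsilon\le\lambda(2k+1)\,d_{(H,B)}(h,h')+\lambda\varepsilon$ gives one direction, while $d_{(G,X)}(\sigma_H(h,h')(s),\sigma_H(h,h')(t))\ge d_{(G,X)}(\sigma(h,h')(s),\sigma(h,h')(t))-2k$ combined with part (1) gives the other. Boundedness is transferred from $\sigma$ in exactly the same fashion: estimate $d_{(G,X)}(\sigma_H(x,y)(t),\sigma_H(x',y')(t))\le d_{(G,X)}(\sigma(x,y)(t),\sigma(x',y')(t))+2k$, bound the right-hand side using boundedness of $\sigma$, pass from $d_{(G,X)}$ to $d_{(H,B)}$ via part (1), and replace $d_{(G,X)}(x,y)$ by $(2k+1)\,d_{(H,B)}(x,y)$. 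Hence $\sigma_H$ is a semihyperbolic structure on $H$. The step I expect to need real care is the construction of $\pi$: the remainder is bookkeeping with quasi-isometry constants, but it is exactly the orbit-wise definition of $\pi$ and the constancy of $d(\cdot,H)$ on each orbit $Hg_i$ that simultaneously yield equivariance and the uniform bound $d(g,\pi(g))\le k$ along combing lines between elements of $H$; without equivariance one would obtain only weak semihyperbolicity of $H$.
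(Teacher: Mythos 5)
Your argument is correct: part (1) is the standard connect-the-dots construction of a finite generating set from a quasi-convexity constant plus the quasi-geodesic length bound, and part (2) correctly builds the $H$-equivariant nearest-point projection coset-by-coset and pushes the combing lines into $H$, which is exactly the content of Lemma 7.2 and Theorem 7.3 of Alonso--Bridson. The paper itself gives no proof of this lemma (it is quoted directly from that reference), so there is nothing to compare beyond noting that your reconstruction matches the cited source's approach.
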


In addition, according to \cite[Proposition 7.5]{Alonso-Bridson95}, centralizers are quasi-convex with respect to a semihyperbolic structure for the entire group.

\begin{lemma}{\rm(}{$\mathrm{Short}$}{\rm)}\label{cent_qconvex}
Let $G$ be a finitely generated group with a (possibly not quasi-geodesic but) bounded
equivariant bicombing $\sigma$.
Then, the centralizer of every element in $G$ is $\sigma$-quasi-convex.
\end{lemma}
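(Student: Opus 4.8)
The plan is to prove Lemma~\ref{cent_qconvex} by exploiting the equivariance and boundedness of $\sigma$ together with the observation that an element commuting with $g$ translates the combing line $\sigma(g)$ to the combing line emanating from itself. Fix $g \in G$ and let $h$ be an arbitrary element of the centralizer $C_G(g)$. We want a uniform bound on $d(\sigma(h)(t), C_G(g))$ for all $t \in \mathbb{N}$. The natural candidate for a nearby point in $C_G(g)$ is the point $\sigma(h)(t)$ itself — so the real content is to show that every vertex along $\sigma(h)$ lies within a uniformly bounded distance of some element actually centralizing $g$.

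First I would set up the key comparison. Consider the two combing lines $\sigma(1,h)$ and $\sigma(g, hg)$. Since $h$ centralizes $g$, we have $hg = gh$, so $\sigma(g, hg) = \sigma(g\cdot 1, g\cdot h) = g\cdot\sigma(1,h)$ by equivariance; that is, $\sigma(g,hg)(t) = g\,\sigma(h)(t)$ for every $t$. Now apply the boundedness condition to the four points $x = 1$, $y = h$, $x' = g$, $y' = hg$: noting $d(1,h) = \|h\|$ and $d(g, hg) = \|g^{-1}hg\|$, and observing that $\|g^{-1}hg\| = \|h\|$ is not needed but in any case $d(g,hg) = d(1, g^{-1}hg)$, we get
$$ d\bigl(\sigma(h)(t),\, g\,\sigma(h)(t)\bigr) = d\bigl(\sigma(1,h)(t),\, \sigma(g,hg)(t)\bigr) \leq k_1 \max\{\|h\|,\, d(g,hg)\} + k_2. $$
This is not yet a uniform bound because the right-hand side grows with $\|h\|$. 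The fix is to run the same argument along the combing line rather than at its endpoints: for a fixed parameter $t$, let $u_t = \sigma(h)(t) \in G$ be the group element at time $t$. Then I would compare $\sigma(1, u_t^{-1}h)$ with $\sigma(u_t^{-1}, u_t^{-1}hg\cdot(\text{something}))$ — more cleanly, I apply equivariance with the element $u_t^{-1}$ to translate so that time $t$ lands at the identity, and use boundedness on the \emph{two combing lines that agree at time $t$} but start from $1$ versus from $g$. The point is that the function $t \mapsto d(g^{u_t}, 1)$ — where $g^{u_t} := u_t^{-1} g\, u_t$ — can be controlled: at $t=0$ it is $\|g\|$, and at consecutive times $u_t$ and $u_{t+1}$ differ by one generator (since $\sigma(h)$ is a discrete path), so $d(g^{u_{t+1}}, g^{u_t}) \leq 2\|g\|_{\mathrm{as\ conjugator}} $, giving at worst linear growth — still not bounded.

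The honest route, which is what Alonso--Bridson~\cite[Proposition 7.5]{Alonso-Bridson95} do, is to feed boundedness the \emph{pair} $(1, h)$ against $(g^{-1}, g^{-1}h)$, using that $g^{-1}h = hg^{-1}$ is \emph{not} true in general — rather one uses $(1,h)$ against $(g^{-1}hg^{-1}\cdot\ldots)$. Let me state it correctly: apply boundedness to $x=1,\ y=h$ and $x' = hg^{-1},\ y' = h g^{-1}h$; since $h$ commutes with $g$, $hg^{-1} = g^{-1}h$ and $hg^{-1}h = g^{-1}h^2 = g^{-1}h\cdot h$, so $(x',y') = g^{-1}\cdot(h, h^2) = g^{-1}h\cdot(1, h)$, whence $\sigma(x',y')(t) = g^{-1}h\,\sigma(h)(t)$ by equivariance. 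Since $d(x,x') = d(1, g^{-1}h) = \|g^{-1}h\|$ and $d(y, y') = d(h, g^{-1}h^2) = \|h^{-1}g^{-1}h^2\| = \|h^{-1}g^{-1}h\cdot h\|$, hmm — this still has an $h$. The clean choice that works: take $x = 1, y = h, x' = g, y' = gh$; then $d(x,x') = \|g\|$, $d(y,y') = d(h, gh) = \|h^{-1}gh\| = \|g\|$ when $h$ centralizes $g$ — and $\sigma(g, gh) = g\sigma(1,h)$. Thus
$$ d\bigl(\sigma(h)(t),\, g\,\sigma(h)(t)\bigr) \leq k_1\|g\| + k_2 =: k, $$
a bound depending only on $g$. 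This exactly says $u_t^{-1} g\, u_t$ has $\|u_t^{-1}g\,u_t\| = d(u_t, g u_t) \leq k$, i.e. $u_t$ moves $g$ a bounded amount under conjugation.

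**The main obstacle** — and the step deserving the most care — is converting ``$u_t$ conjugates $g$ into a bounded ball'' into ``$u_t$ is boundedly close to something in $C_G(g)$.'' This is where I would invoke a standard ping-pong/coset-counting fact: the set $B = \{w \in G : \|w^{-1}gw\| \leq k\}$ is a union of at most $|B_X(k)|$ left cosets of $C_G(g)$ (two elements $w_1, w_2 \in B$ with $w_1^{-1}gw_1 = w_2^{-1}gw_2$ satisfy $w_1 w_2^{-1} \in C_G(g)$), hence every element of $B$ lies within $\max\{\|c\| : c \in B\}$ — a finite number depending only on $g$ and $k$ — of $C_G(g)$. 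Applying this with $w = u_t = \sigma(h)(t)$ gives $d(\sigma(h)(t), C_G(g)) \leq k'$ for a constant $k'$ independent of $h$ and $t$, which is precisely $\sigma$-quasi-convexity of $C_G(g)$. The one technical subtlety to flag is that $B$ need not be finite, only a finite union of cosets, so one picks in each such coset meeting $B$ a representative and bounds by the max of their lengths; this uses only that there are finitely many cosets and is purely formal. $\square$
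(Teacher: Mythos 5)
Your final argument is correct and is essentially the paper's own proof: you compare $\sigma(1,h)$ with its translate $g\cdot\sigma(1,h)=\sigma(g,gh)$ via equivariance and boundedness to get $\|\sigma(h)(t)^{-1}g\,\sigma(h)(t)\|\leq k_1\|g\|+k_2$, and then, exactly as the paper does with its shortest conjugators $\psi_t$, you pick one conjugator for each of the finitely many possible values of $\sigma(h)(t)^{-1}g\,\sigma(h)(t)$ to land uniformly close to the centralizer. The false starts in your writeup are clearly flagged and harmless, and your correctly handled subtlety (the set $B$ is only a finite union of cosets, not finite) is implicit in the paper's choice of shortest $\psi_t$ per value of $\gamma_t$.
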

\begin{proof}[Proof of Lemma~\ref{cent_qconvex}]
Let $a$ be an element of $G$. 
We denote the centralizer of $a$ in $G$ by $Z(a)$. 
Pick an element $g$ of $Z(a)$. 
We wish to find a constant $k \geq 0$ such that $\sigma(g)$ is uniformly $k$-close to $Z(a)$. 
Let $T_g$ be the length of the combing line $\sigma(g)$. 
By the equivariance of $\sigma$, for each $t \leq T_g$, there exists an element $\gamma_t \in G$ such that $\sigma(g)(t) \gamma_t \sigma(g)(t)^{-1} = a$ and that $d_{G}(1, \gamma_t) \leq k_1 d_{G}(1, a) + k_2$, where $k_1$ and
$k_2$ are the constants for a bounded bicombing $\sigma$. 
As $\sigma(g)(t) \gamma_t \sigma(g)(t)^{-1} = a$, there exists a shortest word $\psi_t$ such that $\psi_t \gamma_t \psi_t^{-1} = a$. 
Set 
$$P:= \cup_{g \in Z(a)} \cup_{1 \leq t \leq T_g} \{ \psi_t \}. $$ 
As the set $\{ \gamma_t \mid g \in Z(a), \ 1 \leq t \leq T_g \}$ is contained in the closed ball of radius $k_1 d_{G}(1, a) + k_2$ centred at $1$, another set $P$ is also bounded. 
Therefore, $k: = \mathrm{max} \{d_{G}(1, \psi) \mid \psi \in P \}$ exists. 
The identity $\sigma(g)(t) \psi_t^{-1} a \psi_t \sigma(g)(t)^{-1} = a$ implies that
$\sigma(g)(t) \psi_t^{-1} \in Z(a)$. 
Thus, $Z(a)$ is $\sigma$-quasi-convex with the above constant $k$. 
\end{proof}

A key ingredient of our proof for Theorem~\ref{first_thm} is that the (extended) mapping
class groups of orientable hyperbolic surfaces are semihyperbolic. 

\begin{lemma}{\rm(}\cite[Corollary D]{Durham-Minsky-Sisto20},~\cite[Corollary
3.11]{Haettel-Hoda-Petyt20}{\rm)}\label{mcg_is_semihyp}
For any orientable hyperbolic surface $S$ of finite type, the mapping class group
$\mathrm{Mod}(S)$ and extended mapping class group $\mathrm{Mod}^{\pm}(S)$ of $S$
are semihyperbolic.
Here, $\mathrm{Mod}^{\pm}(S)$ is the group consisting of the isotopy classes of
homeomorphisms on $S$ which preserve $\partial S$ pointwise.
\end{lemma}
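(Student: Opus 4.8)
The plan is to split the statement into its two assertions. For $\mathrm{Mod}(S)$ I would simply invoke the cited results, recalling the mechanism: Haettel--Hoda--Petyt prove that $\mathrm{Mod}(S)$ acts properly cocompactly by isometries on a (coarsely) injective metric space, and a group acting geometrically on such a space carries a bounded, quasi-geodesic, equivariant bicombing (a conical/Helly-type geodesic bicombing, after Lang and its coarsification), hence is semihyperbolic in the sense of the definitions above; alternatively Durham--Minsky--Sisto produce such a bicombing directly from stable cubulations and barycenters. So this half is genuinely a citation.

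For $\mathrm{Mod}^{\pm}(S)$ I would first reduce the problem. The orientation homomorphism $\mathrm{Mod}^{\pm}(S)\to\{\pm 1\}$ has kernel $\mathrm{Mod}(S)$, so $[\mathrm{Mod}^{\pm}(S):\mathrm{Mod}(S)]\le 2$. Moreover, when $\partial S\ne\emptyset$ any homeomorphism fixing $\partial S$ pointwise is automatically orientation-preserving: it acts as the identity on $H_1(\partial S)$, and since the boundary class $\partial[S]$ is a nonzero element there, the induced map on $H_2(S,\partial S)$ must fix the fundamental class. Hence in the bordered case $\mathrm{Mod}^{\pm}(S)=\mathrm{Mod}(S)$ and there is nothing further to prove; the only genuine case is $\partial S=\emptyset$, where $\mathrm{Mod}(S)$ sits with index exactly $2$.

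In that remaining case I would upgrade rather than re-prove. The injective metric space of Haettel--Hoda--Petyt is assembled from data — curve graphs of subsurfaces and subsurface projections — on which $\mathrm{Mod}^{\pm}(S)$ acts naturally, extending the $\mathrm{Mod}(S)$-action; the resulting $\mathrm{Mod}^{\pm}(S)$-action on the same space is still proper and cocompact because $\mathrm{Mod}(S)$ already is and has finite index, so re-applying ``geometric action on a (coarsely) injective space $\Rightarrow$ semihyperbolic'' finishes the argument. The main obstacle is exactly this last step: finite-index overgroups are \emph{not} covered by the quasi-convexity machinery of Lemma~\ref{qconvex_qiemb} (which only handles passing to finite-index subgroups), and since the paper itself notes that semihyperbolicity is not known to be a quasi-isometric invariant, one must either cite a closure-under-finite-extensions statement (Alonso--Bridson's inheritance results, or closure of Helly groups under finite extensions) or verify by hand that the semihyperbolic structure for $\mathrm{Mod}(S)$ can be chosen $\mathrm{Mod}^{\pm}(S)$-equivariantly. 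I expect this equivariance check — presumably the substance of the remark attributed to H.~Petyt in the acknowledgements — to be where the actual work lies.
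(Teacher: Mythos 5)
The paper gives no proof of this lemma at all: it is stated purely as a citation to \cite[Corollary D]{Durham-Minsky-Sisto20} and \cite[Corollary 3.11]{Haettel-Hoda-Petyt20}, so there is no argument in the text to compare yours against line by line. Your proposal is a correct unpacking of what that citation has to deliver, and it adds two genuine observations the paper does not make. First, your reduction in the bordered case is right: a homeomorphism fixing $\partial S$ pointwise is automatically orientation-preserving (your homological argument works because $H_2(S)=0$ for a compact surface with nonempty boundary, so $\partial\colon H_2(S,\partial S)\to H_1(\partial S)$ is injective; equivalently one can argue locally at a boundary point), so $\mathrm{Mod}^{\pm}(S)=\mathrm{Mod}(S)$ there and the extended statement is only at stake for closed and punctured surfaces. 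Second, you correctly identify the one delicate point: Lemma~\ref{qconvex_qiemb} only passes semihyperbolicity \emph{down} to quasi-convex (e.g.\ finite-index) subgroups, and since semihyperbolicity is not known to be a quasi-isometric invariant one cannot simply transport the structure from $\mathrm{Mod}(S)$ up to the index-two overgroup $\mathrm{Mod}^{\pm}(S)$. Your proposed resolution --- that the coarsely injective space (resp.\ the stable cubulations/barycenter data) is built $\mathrm{Mod}^{\pm}(S)$-equivariantly from curve graphs and subsurface projections, so the finite extension acts geometrically on the very same space and the ``geometric action on a coarsely injective space $\Rightarrow$ semihyperbolic'' implication applies again --- is the standard and correct way to close this gap, and the acknowledgement to H.~Petyt suggests this is precisely the point the authors verified with the source. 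The only caveat is that, as written, this step still rests on an equivariance/functoriality claim about the cited constructions that you assert rather than check; if you wanted a self-contained proof you would need to either verify that the coarse injective hull construction is natural under the full isometry action, or invoke an explicit closure-under-finite-extensions statement for coarsely Helly groups. As a reading of the lemma-as-citation, though, your account is accurate.
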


\begin{remark}\label{mcg_is_hyp_for_low_complexity}
We can see the semihyperbolicity of the mapping class groups $\mathrm{Mod}(S)$ of the
orientable surface $S$ with non-negative Euler characteristics as follows. 
There are seven orientable surfaces with non-negative Euler characteristics, $S_{0}$,
$S_{0,1}$, $S_{0}^{1}$, $S_{0,2}$, $S_{0}^{2}$, $S_{0,1}^{1}$, and $S_{1}$. 
Only three of these surfaces have non-trivial mapping class groups, namely, 
$\mathrm{Mod}(S_{0,2})\cong\mathbb{Z}_{2}$, 
$\mathrm{Mod}(S_{0}^{2})\cong\mathbb{Z}$, and
$\mathrm{Mod}(S_{1,0})\cong\mathrm{SL}(2,\mathbb{Z})$. 
All these groups are Gromov hyperbolic, and so they are semihyperbolic.
In these cases, $\mathrm{Mod}^{\pm}(S)$ is also Gromov hyperbolic, and thus
semihyperbolic. 
\end{remark}

In conclusion, we have the following lemma. 

\begin{lemma}\label{centralizer_of_mcg_is_undistorted}
Let $S$ be an orientable surface of finite type.
The centralizer of every element in the (extended) mapping class group of $S$ is quasi-
isometrically embedded and semihyperbolic.
\end{lemma}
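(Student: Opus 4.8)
The plan is to deduce the statement by combining the semihyperbolicity of the (extended) mapping class group with the two general facts recorded above: that centralizers are quasi-convex with respect to any bounded equivariant bicombing (Lemma~\ref{cent_qconvex}), and that $\sigma$-quasi-convex subgroups of a group equipped with a semihyperbolic structure $\sigma$ are both quasi-isometrically embedded and semihyperbolic (Lemma~\ref{qconvex_qiemb}). Nothing new needs to be proved; the work is in assembling these pieces and checking that the hypotheses apply in every case.

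First I would split according to the Euler characteristic of $S$. If $S$ is a hyperbolic surface of finite type, Lemma~\ref{mcg_is_semihyp} gives that $\mathrm{Mod}(S)$ and $\mathrm{Mod}^{\pm}(S)$ are semihyperbolic. If $\chi(S)\geq 0$, then Remark~\ref{mcg_is_hyp_for_low_complexity} shows that $\mathrm{Mod}(S)$ and $\mathrm{Mod}^{\pm}(S)$ are Gromov hyperbolic, and Gromov hyperbolic groups are semihyperbolic. In either case, write $G$ for whichever of $\mathrm{Mod}(S)$, $\mathrm{Mod}^{\pm}(S)$ is under consideration, and fix a semihyperbolic structure $\sigma$ on $G$, i.e. a quasi-geodesic, bounded, equivariant bicombing.

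Next, for an element $a\in G$, apply Lemma~\ref{cent_qconvex} to the bounded equivariant bicombing $\sigma$: the centralizer $Z(a)$ is $\sigma$-quasi-convex. Since $\sigma$ is moreover quasi-geodesic, part~(1) of Lemma~\ref{qconvex_qiemb} shows that $Z(a)$ is finitely generated and quasi-isometrically embedded in $G$; and since $\sigma$ is a full semihyperbolic structure, part~(2) of Lemma~\ref{qconvex_qiemb} shows that $Z(a)$ is semihyperbolic. This is precisely the conclusion.

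There is essentially no hard step here, as the substantive content has already been isolated in the earlier lemmas; the only point that requires any attention is the bookkeeping in the low-complexity case, namely confirming that each of the finitely many possibilities for $\mathrm{Mod}(S)$ and $\mathrm{Mod}^{\pm}(S)$ when $\chi(S)\geq 0$ (the trivial group, $\mathbb{Z}_{2}$, $\mathbb{Z}$, $\mathrm{SL}(2,\mathbb{Z})$, and their index-two extensions) is Gromov hyperbolic, which is clear since each such group is finite or virtually free, and hence that Lemma~\ref{cent_qconvex} and Lemma~\ref{qconvex_qiemb} still apply via the geodesic bicombing of a hyperbolic group.
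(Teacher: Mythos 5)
Your proposal is correct and follows exactly the route the paper intends: the paper states this lemma with the phrase ``In conclusion'' precisely because it is the assembly of Lemma~\ref{mcg_is_semihyp} (or Remark~\ref{mcg_is_hyp_for_low_complexity} in the non-hyperbolic cases) with Lemma~\ref{cent_qconvex} and both parts of Lemma~\ref{qconvex_qiemb}. Your extra care in checking the low-complexity surfaces matches Remark~\ref{mcg_is_hyp_for_low_complexity}, so there is nothing to add.
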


%%%%%%%%%%%%%%%%%%%%%%%%%%%%%%%%%%%%%%%%%%%%%
%%%%%%%%%%%%%%%%%%
%%%%%%Nonorientable surface mapping class groups are undistorted%%%%%%
%%%%%%%%%%%%%%%%%%%%%%%%%%%%%%%%%%%%%%%%%%%%%
%%%%%%%%%%%%%%%%%%
\section{Nonorientable surface mapping class groups are undistorted}\label{Nonorientable
surface mapping class groups are undistorted}

In this section, we prove Theorem~\ref{first_thm}. 
We first explain the orientation double covering of a nonorientable surface. 
We represent $S_{g-1,2p}^{2b}$ ($g\geq 1$) in the three-dimensional Euclidean space
$\mathbb{R}^{3}$ in such a way that it is invariant under the composition of the three
reflections about the $xy$, $yz$, and $zx$ planes, as illustrated in 
Figure~\ref{fig_orientable_surface_in_R3}. 
\begin{figure}[h]
\includegraphics[scale=0.34]{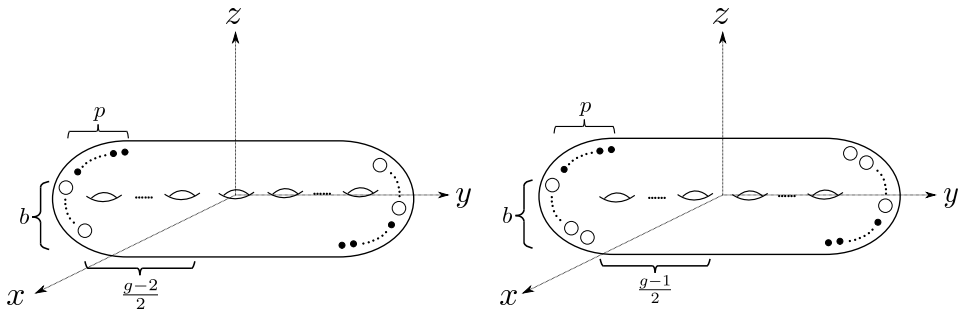}
\caption{We represent $S_{g-1,2p}^{2b}$ in $\mathbb{R}^{3}$ as the left (resp.~right)
surface when $g-1$ is odd (resp.~even), where the $2b$ circles are the boundary components
and the $2p$ points are the punctures of $S_{g-1,
2p}^{2b}$.}\label{fig_orientable_surface_in_R3}
\end{figure}
Then, we define an involution $J\colon S_{g-1,2p}^{2b} \rightarrow S_{g-1,2p}^{2b}$ by
$J(x,y,z)=(-x,-y,-z)$. 
The relation $(x,y,z)\sim J(x,y,z)$ generates an equivalence relation on $S_{g-1,2p}^{2b}$. 
The projection $j\colon S_{g-1,2p}^{2b}\rightarrow S_{g-1,2p}^{2b}/{\sim}$ is a covering
map of order two, and the quotient space $S_{g-1,2p}^{2b}/{\sim}$ is the nonorientable
surface $N_{g,p}^{b}$ (see Figure~\ref{fig_double_cover_v5}). 
In other words, the double covering space of $N_{g, p}^{b}$ induced by the projection $j$ is $S_{g-1, 2p}^{2b}$. 
As mentioned in Section~\ref{Introduction}, Birman--Chillingworth~\cite[Theorem
1]{Birman-Chillingworth72} (for closed surfaces, where $g\geq 3$),
Szepietowski~\cite[Lemma 3]{Szepietowski10} (for surfaces with boundaries, where
$g,b\geq 1$), and Gon\c{c}alves--Guaschi--Maldonado~\cite[Theorem 1.1]{Goncalves-Guaschi-Maldonado18} (for surfaces with punctures, where $g,p\geq 1$) proved that the
mapping class group $\mathrm{Mod}(N_{g, p}^{b})$ is a subgroup of
$\mathrm{Mod}(S_{g-1, 2p}^{2b})$, which consists of the elements commuting with the
isotopy class of $J$. 
Moreover, for nonorientable surfaces $N_{g,p}^{b}$ with $p \geq 1, b \geq 1$, we can obtain an injective homomorphism $\mathrm{Mod}(N_{g,p}^{b})\hookrightarrow\mathrm{Mod}(S_{g-1, 2p}^{2b})$ by applying the same argument used by Szepietowski~\cite[Lemma 3]{Szepietowski10} to the result of Gon\c{c}alves--Guaschi--Maldonado~\cite[Theorem 1.1]{Goncalves-Guaschi-Maldonado18}. 
This can be formulated in the following lemma. 

\begin{lemma}{\rm(}\cite[Theorem 1]{Birman-Chillingworth72},~\cite[Lemma
3]{Szepietowski10},~\cite[Theorem 1.1]{Goncalves-Guaschi-Maldonado18}{\rm)}\label{injective_homomorphism_induced_by_orientation_double_cover} 
For all but $(g, p, b) = (1,0,0),~(2, 0, 0)$, the orientation double covering $j$ induces an injective homomorphism $\iota\colon\mathrm{Mod}(N_{g, p}^{b})\hookrightarrow\mathrm{Mod}(S_{g-1, 2p}^{2b})$. 
Moreover, the image of $\mathrm{Mod}(N_{g,p}^{b})$ given by $\iota$ consists of the
isotopy classes of orientation-preserving homeomorphisms of $S_{g-1,2p}^{2b}$ which
commute with $J$. 
\end{lemma}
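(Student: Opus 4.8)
The plan is to construct $\iota$ by hand from covering space theory and then to isolate the one delicate point, which is injectivity. First I would note that $j_{*}\pi_{1}(S_{g-1,2p}^{2b})$ is exactly the kernel of the orientation character $\pi_{1}(N_{g,p}^{b})\to\mathbb{Z}/2\mathbb{Z}$, hence a characteristic subgroup; consequently every homeomorphism $\bar{f}$ of $N_{g,p}^{b}$ that preserves the punctures and fixes $\partial N_{g,p}^{b}$ pointwise lifts through $j$. There are exactly two lifts, interchanged by the nontrivial deck transformation $J$, and since $J$ reverses orientation (this is precisely why $N_{g,p}^{b}$ is nonorientable) exactly one lift $\tilde{f}$ is orientation-preserving. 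As $\tilde{f}J\tilde{f}^{-1}$ is then a deck transformation of order dividing $2$, it equals $J$, so $\tilde{f}$ commutes with $J$; and $\tilde{f}\tilde{g}$ is again an orientation-preserving lift of $\bar{f}\bar{g}$, so $\bar{f}\mapsto\tilde{f}$ is a homomorphism into the group of orientation-preserving homeomorphisms of $S_{g-1,2p}^{2b}$ commuting with $J$.

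Next I would pass to mapping class groups. Given an isotopy from $\mathrm{id}$ to $\bar{f}$, the homotopy lifting property applied starting from $\mathrm{id}_{S_{g-1,2p}^{2b}}$ yields an isotopy from $\mathrm{id}_{S_{g-1,2p}^{2b}}$ whose endpoint is an orientation-preserving lift of $\bar{f}$, hence is $\tilde{f}$; so $[\tilde{f}]$ depends only on $[\bar{f}]$, giving $\iota\colon\mathrm{Mod}(N_{g,p}^{b})\to\mathrm{Mod}(S_{g-1,2p}^{2b})$. The delicate bookkeeping here — and the reason Szepietowski's refinement is needed once $b\geq 1$ — is to arrange every lifted isotopy to fix $\partial S_{g-1,2p}^{2b}$ pointwise and to confirm that the orientation-preserving lift is the one that does so; for the mixed range $p,b\geq 1$ I would graft Szepietowski's boundary argument \cite{Szepietowski10} onto the punctured statement of Gon\c{c}alves--Guaschi--Maldonado \cite{Goncalves-Guaschi-Maldonado18}, as indicated just before the lemma.

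The crux is injectivity: if $\tilde{f}$ is isotopic to the identity of $S_{g-1,2p}^{2b}$, I must deduce that $\bar{f}$ is isotopic to the identity of $N_{g,p}^{b}$, and the obstacle is that such an isotopy of $S_{g-1,2p}^{2b}$ need not be $J$-equivariant, so it does not obviously descend. One route is geometric: $J$ is isotopic to an involutive isometry of a hyperbolic structure on $S_{g-1,2p}^{2b}$, and two isotopic $J$-equivariant homeomorphisms are $J$-equivariantly isotopic — this Birman--Hilden-type rigidity is essentially what underlies the three cited results, so I would invoke them regime by regime (closed, $g\geq 3$: \cite{Birman-Chillingworth72}; with boundary: \cite{Szepietowski10}; with punctures: \cite{Goncalves-Guaschi-Maldonado18}), the excluded cases being exactly the two closed nonorientable surfaces of nonnegative Euler characteristic, for the Klein bottle of which the homomorphism genuinely fails to exist (Remark~\ref{Klein_bottle_case}). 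A second route is algebraic: outside the excluded cases $\mathrm{Mod}(N_{g,p}^{b})$ injects into $\mathrm{Out}$ of the appropriate (relative) fundamental group, while $\tilde{f}\simeq\mathrm{id}$ forces $\tilde{f}_{*}$ to be inner on $\pi_{1}(S_{g-1,2p}^{2b})$, and an automorphism of the surface group $\pi_{1}(N_{g,p}^{b})$ that is inner on the finite-index subgroup $\pi_{1}(S_{g-1,2p}^{2b})$ is itself inner, so $\bar{f}\simeq\mathrm{id}$. Finally, for the description of the image the converse is short: an orientation-preserving homeomorphism $h$ of $S_{g-1,2p}^{2b}$ commuting with $J$ descends to a homeomorphism $\bar{h}$ of $N_{g,p}^{b}$ fixing the boundary pointwise, and $h$ is then the orientation-preserving lift of $\bar{h}$, so $[h]=\iota([\bar{h}])$. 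I expect the equivariant-isotopy input for injectivity to be the main obstacle, with the boundary-fixing bookkeeping in the mixed case a close second.
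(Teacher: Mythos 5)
The paper offers no proof of this lemma: it is imported wholesale from the three cited references (with the mixed case $p,b\geq 1$ obtained by grafting Szepietowski's boundary argument onto Gon\c{c}alves--Guaschi--Maldonado, exactly as you indicate), so there is no internal argument to compare yours against. Judged on its own, your sketch is the standard covering-space proof and its outline is sound: existence of lifts because homeomorphisms preserve the orientation character, uniqueness of the orientation-preserving lift because the two lifts differ by the orientation-reversing deck transformation $J$, commutation with $J$ because $\tilde{f}J\tilde{f}^{-1}$ is a nontrivial deck transformation, descent to mapping classes by lifting isotopies, and the short converse identifying the image. You also correctly isolate injectivity as the only substantive point, and both routes you propose are viable in the closed case; the relative versions needed when $b\geq 1$ or $p\geq 1$ are precisely what the cited papers supply.

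Two soft spots worth tightening. First, $j_{*}\pi_{1}(S_{g-1,2p}^{2b})$ is \emph{not} a characteristic subgroup of $\pi_{1}(N_{g,p}^{b})$ in the abstract group-theoretic sense when that group is free (a free group of rank $\geq 2$ has many index-two subgroups permuted by $\mathrm{Aut}$); what you actually need, and what is true, is that it is invariant under automorphisms induced by homeomorphisms, because $w_{1}$ is a topological invariant. Second, your algebraic route to injectivity via $\mathrm{Out}$ cannot be run with the absolute outer automorphism group once $b\geq 1$: a Dehn twist about a boundary-parallel curve is trivial in $\mathrm{Out}(\pi_{1}(N))$ but nontrivial in $\mathrm{Mod}(N)$ as defined here (boundary fixed pointwise), so you genuinely need the relative version with peripheral structure, or you must fall back on the Birman--Hilden-type equivariant-isotopy route. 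You flag this as ``bookkeeping,'' but it is the actual content of Szepietowski's Lemma 3 rather than a detail; since the statement is presented as a citation, deferring to that reference at this point is legitimate.
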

\begin{figure}[h]
\includegraphics[scale=0.36]{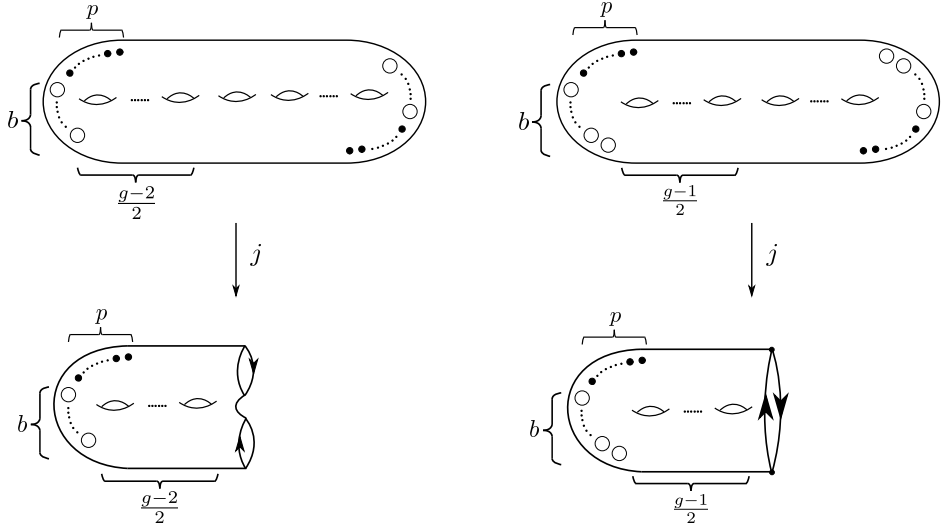}
\caption{The orientation double covering $j\colon S_{g-1, 2p}^{2b}\rightarrow N_{g,
p}^{b}$ when $g-1$ is odd (left) and even (right).}\label{fig_double_cover_v5}
\end{figure}
We can now prove Theorem~\ref{first_thm}. 

\begin{proof}[Proof of Theorem~\ref{first_thm}]
As $\mathrm{Mod}(N_{1})$ is trivial, it is clear that $\mathrm{Mod}(N_{1})$ is quasi-
isometrically embedded in $\mathrm{Mod}(S_{0})$. 
Thus, we may assume that $(g,p,b)$ is neither $(1,0,0)$ nor $(2, 0, 0)$.
Set $S=S_{g-1, 2p}^{2b}$. 
Because the homeomorphism $J$ defined above is orientation-reversing, the isotopy class
$[J]$ is contained in the extended mapping class group $\mathrm{Mod}^{\pm}(S)$. 
Let $N=N_{g,p}^{b}$ be the quotient space $S/{\sim}$ given by the projection $j$. 
By Lemma~\ref{injective_homomorphism_induced_by_orientation_double_cover}, $\mathrm{Mod}(N)$ is a subgroup of the centralizer $Z([J])$ in $\mathrm{Mod}^{\pm}(S)$. 
Moreover, as every orientation-preserving homeomorphism of $S$ which commutes with $J$
preserves the fibres of all points of $N$, all orientation preserving elements in $Z([J])$ come from $\mathrm{Mod}(N)$, and therefore the index of $\mathrm{Mod}(N)$ in $Z([J])$ is two. 
Then, we have the quasi-isometry $f\colon\mathrm{Mod}(N)\hookrightarrow Z([J])$ induced
by $\iota$, because every finite-index subgroup is quasi-isometric to the ambient group. 
By the definition of $f$, we have that $f(\varphi) = \iota(\varphi)$ for any $\varphi \in
\mathrm{Mod}(N)$. 
Note that $\mathrm{Mod}(S)$ is an index-two subgroup of $\mathrm{Mod}^{\pm}(S)$.
Hence, we have the quasi-isometry $h\colon\mathrm{Mod}(S)\rightarrow\mathrm{Mod}^{\pm}(S)$ induced by the inclusion map. 
We define a quasi-inverse $h' \colon\mathrm{Mod}^{\pm}(S)\rightarrow\mathrm{Mod}(S)$
as follows. 
If $\varphi\in\mathrm{Mod}^{\pm}(S)$ is an orientation-preserving element, then
$h' (\varphi)=\varphi\in\mathrm{Mod}(S)$.
If $\varphi\in\mathrm{Mod}^{\pm}(S)$ is an orientation-reversing element, then there exists some $\psi\in\mathrm{Mod}(S)$ such that $\varphi=[J]\psi$ (i.e.~$\psi= [J] \varphi$), and so we set $h'(\varphi)=\psi$. 
The composition $h' \circ h$ then gives the identity map on $\mathrm{Mod}(S)$. 
By Lemma~\ref{centralizer_of_mcg_is_undistorted}, we see that $Z([J])$ is quasi-
isometrically embedded in $\mathrm{Mod}^{\pm}(S)$. 
Let $g\colon Z([J])\hookrightarrow\mathrm{Mod}^{\pm}(S)$ be the quasi-isometric
embedding induced by the inclusion map. 
Then, for any $\varphi\in\mathrm{Mod}(N)$, we have $(g\circ f)(\varphi) = (h \circ
\iota)(\varphi) \in\mathrm{Mod}^{\pm}(S)$. 
Consider the composition of $h'$, $g$, and $f$: 
$$h' \circ g\circ f\colon\mathrm{Mod}(N)\hookrightarrow
Z([J])\hookrightarrow\mathrm{Mod}^{\pm}(S)\rightarrow\mathrm{Mod}(S). $$
We then have that $(h' \circ g\circ f)(\varphi)=\iota(\varphi)$ for any
$\varphi\in\mathrm{Mod}(N)$. 
In other words, $\iota\colon\mathrm{Mod}(N)\hookrightarrow\mathrm{Mod}(S)$ 
decomposes into a composition of three quasi-isometric embeddings, so we are done.
\end{proof}

As shown in the proof of Theorem\ref{first_thm}, $\mathrm{Mod}(N)$ is a finite-index
subgroup of $Z([J])$ in the semihyperbolic group $\mathrm{Mod}^{\pm}(S)$. 
Hence, by Lemmas~\ref{qconvex_qiemb} and~\ref{cent_qconvex}, we have the following
corollary. 

\begin{corollary}\label{nonori_semihyp}
Let $N$ be a nonorientable surface of genus $g\geq 1$ with $b\geq 0$ boundary components
and $p\geq 0$ punctures.
Then, the mapping class group $\mathrm{Mod}(N)$ is semihyperbolic.
\end{corollary}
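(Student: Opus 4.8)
The plan is to deduce Corollary~\ref{nonori_semihyp} directly from the structure already assembled in the proof of Theorem~\ref{first_thm}, using only the three lemmas quoted in Section~\ref{Preliminaries}. First I would dispose of the low-complexity cases by hand: when $(g,p,b)=(1,0,0)$ the group $\mathrm{Mod}(N_1)$ is trivial, hence vacuously semihyperbolic, and when $(g,p,b)=(2,0,0)$ the group $\mathrm{Mod}(N_2)$ is finite (it is isomorphic to $\mathbb{Z}_2\oplus\mathbb{Z}_2$), so it is hyperbolic and therefore semihyperbolic. For all remaining $(g,p,b)$, Lemma~\ref{injective_homomorphism_induced_by_orientation_double_cover} gives the embedding $\iota\colon\mathrm{Mod}(N)\hookrightarrow\mathrm{Mod}^{\pm}(S)$, where $S=S_{g-1,2p}^{2b}$, whose image is exactly the orientation-preserving part of the centralizer $Z([J])$ in $\mathrm{Mod}^{\pm}(S)$, and that part has index two in $Z([J])$.

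Next I would invoke semihyperbolicity of the ambient group: by Lemma~\ref{mcg_is_semihyp} (together with Remark~\ref{mcg_is_hyp_for_low_complexity} for the finitely many surfaces with non-negative Euler characteristic), the extended mapping class group $\mathrm{Mod}^{\pm}(S)$ admits a semihyperbolic structure $\sigma$, which is in particular a bounded equivariant bicombing. By Lemma~\ref{cent_qconvex}, the centralizer $Z([J])$ is $\sigma$-quasi-convex in $\mathrm{Mod}^{\pm}(S)$. Since $\sigma$ is a semihyperbolic structure, part~(2) of Lemma~\ref{qconvex_qiemb} then yields that $Z([J])$ is itself semihyperbolic; fix a semihyperbolic structure $\sigma'$ on $Z([J])$. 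Now $\mathrm{Mod}(N)\cong\iota(\mathrm{Mod}(N))$ sits as an index-two, hence $\sigma'$-quasi-convex, subgroup of $Z([J])$ — this is the remark in the excerpt that every finite-index subgroup is quasi-convex because it is dense (coarsely surjective) in the ambient group. Applying part~(2) of Lemma~\ref{qconvex_qiemb} once more shows $\iota(\mathrm{Mod}(N))$, and therefore $\mathrm{Mod}(N)$, is semihyperbolic.

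The argument is essentially a two-step descent — from $\mathrm{Mod}^{\pm}(S)$ to the centralizer $Z([J])$, and from $Z([J])$ to its index-two subgroup $\mathrm{Mod}(N)$ — each step justified by a quasi-convexity statement feeding into Lemma~\ref{qconvex_qiemb}(2). The only real point requiring care, and the place I expect the main (mild) obstacle, is the bookkeeping around $\sigma$ versus $\sigma'$: Lemma~\ref{qconvex_qiemb}(2) produces \emph{some} semihyperbolic structure on the quasi-convex subgroup rather than the restriction of $\sigma$, so one must be explicit that $Z([J])$ gets a genuine semihyperbolic structure of its own before quoting the lemma a second time for $\mathrm{Mod}(N)$. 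Everything else is a direct citation chain, and since semihyperbolicity is independent of the finite generating set (\cite[Corollary 4.2]{Alonso-Bridson95}), no compatibility of generating sets needs to be tracked across the two descent steps.
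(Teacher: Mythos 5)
Your proof is correct and follows essentially the same route as the paper: the low-complexity cases $N_1$, $N_2$ are handled as finite groups, and the remaining cases use the chain $\mathrm{Mod}^{\pm}(S)$ semihyperbolic $\Rightarrow$ $Z([J])$ quasi-convex (Lemma~\ref{cent_qconvex}) hence semihyperbolic (Lemma~\ref{qconvex_qiemb}(2)) $\Rightarrow$ $\mathrm{Mod}(N)$ finite-index hence quasi-convex hence semihyperbolic. Your explicit bookkeeping of the two successive semihyperbolic structures $\sigma$ and $\sigma'$ is a welcome clarification of a step the paper leaves implicit, but it is the same argument.
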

\begin{proof}[Proof of Corollary~\ref{nonori_semihyp}]
If $N\not=N_{1},~N_{2}$, the semihyperbolicity of $\mathrm{Mod}(N)$ comes from the
fact that it is a finite-index subgroup of the centralizer of $[J]$ in
$\mathrm{Mod}^{\pm}(S)$, and so we only have to prove the assertion for
$N=N_{1},~N_{2}$. 
The mapping class groups satisfy $\mathrm{Mod}(N_{1}) = 1$ and
$\mathrm{Mod}(N_{2})\cong \mathbb{Z}_{2}\oplus\mathbb{Z}_{2}$, respectively. 
They are finite groups, so we are done. 
\end{proof}
\begin{remark}\label{Klein_bottle_case}
We can show that $\mathrm{Mod}(N_{2}) \cong \langle x, y \mid x^2, y^2, [x, y] \rangle$ is
never embedded in $\mathrm{Mod}(S_{1}) = \langle a, b \mid a^4, b^6, a^2b^{-3} \rangle$
as follows. 
Suppose, on the contrary, that there exists an injective homomorphism $\phi \colon
\mathrm{Mod}(N_{2}) \hookrightarrow \mathrm{Mod}(S_{1})$. 
Let $\pi \colon \mathrm{Mod}(S_{1}) \rightarrow \langle \bar{a}, \bar{b} \mid \bar{a}^2,
\bar{b}^3 \rangle = \mathrm{PSL}(2, \mathbb{Z})$ be the canonical projection whose kernel
is generated by $a^2$. 
As $\mathrm{Ker}\pi$ consists of two elements, without loss of generality, we may assume
that $\pi(\phi(x))$ is non-trivial. 
By the Kurosh subgroup theorem, there exists an element $g \in \mathrm{PSL}(2,
\mathbb{Z})$ such that $\pi(\phi(x)) = g \bar{a} g^{-1}$. 
Though $\phi(x)$ must be of order $2$, the elements in the preimage $\pi^{-1}(g \bar{a} g^{-1})$ have order $4$, a contradiction. 
\end{remark}

%%%%%%%%%%%%%%%%%%%%%
%%%%%%Appendix%%%%%%%
%%%%%%%%%%%%%%%%%%%%%
\section{Appendix}\label{Appendix}
From the work of Masur--Minsky~\cite{Masur-Minsky00} and Hamenst\"{a}dt's unpublished
paper \cite{Hamenstad09b}, the injective homomorphism between the mapping class groups
of orientable surfaces which is induced by an inclusion of the surfaces is a quasi-isometric embedding. 
In this appendix, we prove a generalization of this result (Proposition~\ref{qie_nonori_inc}) by using the semihyperbolicity of the mapping class groups. 
In the following, we do not consider surfaces of infinite type; thus, we assume that any
surface has finite genus and finite numbers of boundary components and punctures.

Let $F$ be a connected surface.
We say that a subsurface $F' \subset F$ is {\it admissible} if $F'$ is a closed subset of $F$. 
For an admissible subsurface $F' \subset F$, we have a homomorphism $\mathrm{Mod}(F')
\rightarrow \mathrm{Mod}(F)$ by extending the homeomorphisms of $F'$ to the
homeomorphisms of $F$ which are trivial on the outside of $F'$. 
Paris--Rolfsen~\cite{Paris-Rolfsen00} and Stukow~\cite{Stukow09} proved that, under the assumption in Proposition~\ref{qie_nonori_inc}, this natural homomorphism
$\mathrm{Mod}(F')\rightarrow\mathrm{Mod}(F)$ is injective. 

\begin{proposition}\label{qie_nonori_inc}
Let $F$ be a connected orientable or nonorientable surface and $F' \subset F$ be an
admissible connected subsurface.
Suppose that every connected component of $F-\mathrm{Int}(F')$ has a negative Euler
characteristic. 
Then, the injective homomorphism $\mathrm{Mod}(F') \hookrightarrow \mathrm{Mod}(F)$
is a quasi-isometric embedding.
\end{proposition}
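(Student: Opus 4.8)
The plan is to reduce the nonorientable (and mixed) case to the known orientable case via orientation double covers, exactly as in the proof of Theorem~\ref{first_thm}, combined with the semihyperbolicity machinery of Section~\ref{Preliminaries}. First I would observe that the statement is already known when both $F$ and $F'$ are orientable, by Masur--Minsky~\cite{Masur-Minsky00} and Hamenst\"{a}dt~\cite{Hamenstad09b}; moreover, whenever $\mathrm{Mod}(F)$ is semihyperbolic it suffices, by Lemma~\ref{qconvex_qiemb}(1), to exhibit $\mathrm{Mod}(F')$ (or a finite-index subgroup thereof) as a $\sigma$-quasi-convex subgroup for a semihyperbolic structure $\sigma$ on $\mathrm{Mod}(F)$, or more cheaply still, to realize $\mathrm{Mod}(F')$ as (commensurable to) a centralizer, and then invoke Lemma~\ref{cent_qconvex} together with Corollary~\ref{nonori_semihyp}. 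So the whole proof hinges on identifying the image of $\mathrm{Mod}(F')$, up to finite index, with the centralizer of a single well-chosen mapping class inside an ambient semihyperbolic group.

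The key construction is as follows. Suppose first $F$ is nonorientable; let $j\colon \widetilde{F}\to F$ be the orientation double cover, with deck involution $J$, so that by Lemma~\ref{injective_homomorphism_induced_by_orientation_double_cover} the group $\mathrm{Mod}(F)$ is a finite-index subgroup of the centralizer $Z([J])$ in $\mathrm{Mod}^{\pm}(\widetilde{F})$. The preimage $\widetilde{F'}:=j^{-1}(F')$ is a $J$-invariant subsurface of $\widetilde{F}$; by the hypothesis that each component of $F-\mathrm{Int}(F')$ has negative Euler characteristic, the same holds for each component of $\widetilde{F}-\mathrm{Int}(\widetilde{F'})$, so the orientable inclusion $\widetilde{F'}\subset\widetilde{F}$ induces a quasi-isometric embedding $\mathrm{Mod}(\widetilde{F'})\hookrightarrow\mathrm{Mod}(\widetilde{F})$ by the orientable case. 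Now I would argue, by the same ``homeomorphisms commuting with $J$ descend to $F$'' reasoning used in the proof of Theorem~\ref{first_thm}, that $\mathrm{Mod}(F')$ sits with finite index inside the centralizer $Z_{\mathrm{Mod}^{\pm}(\widetilde{F'})}([J|_{\widetilde{F'}}])$, and that the inclusion-induced map $\mathrm{Mod}(F')\to\mathrm{Mod}(F)$ is, up to these finite-index identifications and the passage between $\mathrm{Mod}$ and $\mathrm{Mod}^{\pm}$, nothing but the restriction to this centralizer of the inclusion $\mathrm{Mod}^{\pm}(\widetilde{F'})\hookrightarrow\mathrm{Mod}^{\pm}(\widetilde{F})$. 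Chaining the quasi-isometric embeddings --- $\mathrm{Mod}(F')\hookrightarrow Z([J|_{\widetilde{F'}}])\hookrightarrow\mathrm{Mod}^{\pm}(\widetilde{F'})\hookrightarrow\mathrm{Mod}^{\pm}(\widetilde{F})$, the last being a finite-index extension of the orientable inclusion --- and then projecting back down via a quasi-inverse $\mathrm{Mod}^{\pm}(\widetilde{F})\to\mathrm{Mod}(\widetilde{F})$ and through the finite-index inclusion $\mathrm{Mod}(F)\hookrightarrow Z([J])$, yields that $\mathrm{Mod}(F')\hookrightarrow\mathrm{Mod}(F)$ is a composition of quasi-isometric embeddings, hence a quasi-isometric embedding. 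The case where $F$ is orientable but $F'$ is nonorientable is handled by the same idea applied only on the subsurface side: $\mathrm{Mod}(F')$ is finite-index in $Z_{\mathrm{Mod}^{\pm}(\widetilde{F'})}([J])$, which is quasi-convex in the semihyperbolic group $\mathrm{Mod}^{\pm}(\widetilde{F'})$, and $\widetilde{F'}\subset F$ via the double cover composed with the inclusion gives a quasi-isometrically embedded (orientable) subsurface group; a small compatibility check lets us conclude.

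The main obstacle I anticipate is the \emph{compatibility of the finite-index identifications with the inclusion map} --- i.e., verifying that the square relating $\mathrm{Mod}(F')\to\mathrm{Mod}(F)$ to the centralizer inclusions on the double covers genuinely commutes (up to finite Hausdorff distance), rather than merely that all four groups are pairwise quasi-isometric. Concretely, one must check that a homeomorphism of $F'$ extended by the identity to $F$, when lifted to $\widetilde{F}$, is the same as first lifting to a $J$-equivariant homeomorphism of $\widetilde{F'}$ and then extending by the identity over $\widetilde{F}-\mathrm{Int}(\widetilde{F'})$; this requires care because lifts are only well-defined up to the deck group, and because the extension-by-identity operation must be checked to be compatible with the choice of fundamental domain for $J$. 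A related subtlety is ensuring $\widetilde{F'}$ is connected (or, if it is not, that its mapping class group still embeds appropriately and the argument goes through component-wise using that $F'$ is connected, so $\widetilde{F'}$ has at most two components swapped by $J$). I would also need to double-check the low-complexity edge cases --- where $\widetilde{F}$ or $\widetilde{F'}$ fails to be hyperbolic --- using Remark~\ref{mcg_is_hyp_for_low_complexity} to supply semihyperbolicity, and to confirm that the exceptional pairs excluded in Lemma~\ref{injective_homomorphism_induced_by_orientation_double_cover} do not arise under the negative-Euler-characteristic hypothesis on the complement.
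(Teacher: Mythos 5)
Your route is genuinely different from the paper's. The paper does not pass to the orientation double cover at all in this appendix: it works intrinsically inside $\mathrm{Mod}(F)$, which is already known to be semihyperbolic (Corollary~\ref{nonori_semihyp} for nonorientable $F$, Lemma~\ref{mcg_is_semihyp} for orientable $F$). The key step (Lemma~\ref{realization_subsurface_mcg_for_centralizer}) chooses, in each complementary component of $F'$, a filling pair of two-sided curves together with the boundary-parallel curves, and shows that the intersection of the centralizers of the corresponding Dehn twists in the twist subgroup $\mathcal{T}(F)$ contains $\mathcal{T}(F')\times\mathbb{Z}^r$ with finite index; Short's lemma (Lemma~\ref{cent_qconvex}), quasi-convexity of intersections, and undistortion of direct factors then finish the proof. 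This is uniform in the orientability of $F$ and $F'$ and does not invoke Masur--Minsky. Your approach instead reduces to the orientable subsurface theorem via the double cover, which is conceptually cleaner when it applies but pushes all the difficulty into the compatibility of lifting with extension-by-identity and into the structure of $j^{-1}(F')$.

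The genuine gap is the case where $F'$ is \emph{orientable} and $F$ is nonorientable, which is not a peripheral edge case but a large portion of the statement. There $\widetilde{F'}=j^{-1}(F')$ is two disjoint copies of $F'$ swapped by $J$, so there is no orientation double cover $\widetilde{F'}\to F'$, Lemma~\ref{injective_homomorphism_induced_by_orientation_double_cover} does not apply, and the group $Z_{\mathrm{Mod}^{\pm}(\widetilde{F'})}([J|_{\widetilde{F'}}])$ at the heart of your chain is not the object you describe: you would instead need (a) that $\mathrm{Mod}(F'_1)\times\mathrm{Mod}(F'_2)$, the mapping class group of a \emph{disconnected} essential subsurface, is undistorted in $\mathrm{Mod}(\widetilde{F})$ (a strengthening of the cited orientable result), and (b) that the twisted diagonal $\{(\varphi,\,J\varphi J^{-1})\}$ is undistorted in that product. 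Both are true but neither is supplied by any lemma you invoke, and your parenthetical ``the argument goes through component-wise'' does not identify them. A second, fixable, defect: the map $\mathrm{Mod}^{\pm}(\widetilde{F'})\to\mathrm{Mod}^{\pm}(\widetilde{F})$ by extension-by-identity is not defined on orientation-reversing classes (a homeomorphism of a connected orientable surface that is the identity off $\widetilde{F'}$ cannot reverse orientation on $\widetilde{F'}$), so your chain must be restricted from the outset to the orientation-preserving part $Z([J|_{\widetilde{F'}}])\cap\mathrm{Mod}(\widetilde{F'})$, which by Lemma~\ref{injective_homomorphism_induced_by_orientation_double_cover} is exactly $\iota(\mathrm{Mod}(F'))$ up to finite index. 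Finally, the ``case where $F$ is orientable but $F'$ is nonorientable'' is vacuous, since a subsurface of an orientable surface is orientable.
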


Proposition~\ref{qie_nonori_inc} can be reduced to the following lemma.

\begin{lemma}\label{qie_2}
Let $F$ be a connected orientable or nonorientable surface and $F' \subset F$ be an
admissible connected subsurface. 
Suppose that every connected component of $F-\mathrm{Int}(F')$ has a negative Euler
characteristic. 
Then, there exists a finite-index subgroup $H$ of $\mathrm{Mod}(F)$ such that the natural
injection $\mathrm{Mod}(F') \cap H \hookrightarrow H$ is a quasi-isometric embedding. 
\end{lemma}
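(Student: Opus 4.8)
The plan is to realize the stabilizer in $\mathrm{Mod}(F)$ of the multicurve $\partial F'$ as the centralizer of a finite family of Dehn twists --- hence a quasi-convex subgroup of the semihyperbolic group $\mathrm{Mod}(F)$ --- and then to show that $\mathrm{Mod}(F')$ sits undistortedly inside it. Since Lemma~\ref{qie_2} is a reformulation, up to finite index, of Proposition~\ref{qie_nonori_inc}, I would in fact prove directly that the natural injection $\mathrm{Mod}(F')\hookrightarrow\mathrm{Mod}(F)$ is a quasi-isometric embedding; Lemma~\ref{qie_2} then holds with $H=\mathrm{Mod}(F)$. We may assume that $F'$ is neither a disk, a once-punctured disk, a M\"obius band, nor an annulus: in the first three cases $\mathrm{Mod}(F')$ is finite, and in the last case $\mathrm{Mod}(F')$ is infinite cyclic generated by a single Dehn twist, which is undistorted by Farb--Lubotzky--Minsky~\cite{Farb-Lubotzky-Minsky01}. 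Fix a semihyperbolic structure $\sigma$ on $\mathrm{Mod}(F)$, which exists by Lemma~\ref{mcg_is_semihyp} and Remark~\ref{mcg_is_hyp_for_low_complexity} when $F$ is orientable and by Corollary~\ref{nonori_semihyp} when $F$ is nonorientable. Write $F-\mathrm{Int}(F')=R_1\sqcup\dots\sqcup R_k$ and let $c=c_1\cup\dots\cup c_m$ be the multicurve $\partial F'\cap\mathrm{Int}(F)$; each $c_j$ is two-sided, being a boundary curve of the subsurface $F'$, and since no $R_i$ is a disk, once-punctured disk, M\"obius band, or annulus, the hypothesis $\chi(R_i)<0$ forces every $c_j$ to be essential and non-peripheral in $F$, so each $T_{c_j}$ has infinite order and its centralizer in $\mathrm{Mod}(F)$ is the subgroup of classes fixing $c_j$ together with its co-orientation.

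First I would record that the proof of Lemma~\ref{cent_qconvex} yields the $\sigma$-quasi-convexity of the centralizer of any \emph{finite} subset $\{a_1,\dots,a_r\}$: for $g$ in this centralizer and each $t\le T_g$, equivariance produces elements $\gamma_t^{(i)}$ with $\sigma(g)(t)\,\gamma_t^{(i)}\,\sigma(g)(t)^{-1}=a_i$ lying in a fixed ball, and one takes a shortest word $\psi_t$ conjugating every $\gamma_t^{(i)}$ to $a_i$ simultaneously; the $\psi_t$ lie in a bounded set and $\sigma(g)(t)\psi_t^{-1}\in\bigcap_i Z(a_i)$. Applying this to $\{T_{c_1},\dots,T_{c_m}\}$, the group $\Gamma:=\bigcap_{j}Z_{\mathrm{Mod}(F)}(T_{c_j})$ is $\sigma$-quasi-convex, hence by Lemma~\ref{qconvex_qiemb} it is finitely generated, quasi-isometrically embedded in $\mathrm{Mod}(F)$, and itself semihyperbolic. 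Now let $H_0\le\Gamma$ be the finite-index subgroup of those classes that in addition fix each complementary piece $R_i$ (and hence each side of each $c_j$) setwise; as $\Gamma$ permutes the finitely many complementary components of $c$, this is finite-index in $\Gamma$, and being finite-index in the quasi-isometrically embedded subgroup $\Gamma$ it is itself quasi-isometrically embedded in $\mathrm{Mod}(F)$.

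It remains to see that $\mathrm{Mod}(F')$ is undistorted in $H_0$. An element of $H_0$ can be isotoped to fix $c$ pointwise, hence is the product of its restrictions to $F',R_1,\dots,R_k$; therefore $H_0=\langle\,\mathrm{Mod}(F'),\mathrm{Mod}(R_1),\dots,\mathrm{Mod}(R_k)\,\rangle$ (the subgroups included by extension by the identity, commuting because of disjoint supports), and this group is the central quotient $\bigl(\mathrm{Mod}(F')\times\prod_i\mathrm{Mod}(R_i)\bigr)/N$, where $N$ is generated by the relations identifying the boundary twist of $F'$ about each $c_j$ with the corresponding boundary twist of the adjacent $R_i$. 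In this quotient $\mathrm{Mod}(F')$ is undistorted: a preimage of the class of $a\in\mathrm{Mod}(F')$ has the form $\bigl(a\cdot p(n),\,q(n)\bigr)$ with $n\in\mathbb{Z}^m$, where $p(n)=\prod_j T_{c_j}^{\,n_j}$ in $\mathrm{Mod}(F')$ and $q(n)=\prod_j T_{c_j}^{\,n_j}$ in $\prod_i\mathrm{Mod}(R_i)$; since groups generated by Dehn twists about disjoint curves are undistorted~\cite{Farb-Lubotzky-Minsky01}, the length of $q(n)$ in $\prod_i\mathrm{Mod}(R_i)$ is comparable to $\sum_j|n_j|$, while $\|a\cdot p(n)\|_{\mathrm{Mod}(F')}\ge\|a\|_{\mathrm{Mod}(F')}-C\sum_j|n_j|$, so minimizing the sum of these two quantities over $n$ (and taking $n=0$ for the reverse inequality) shows that the word length of the class of $a$ in $H_0$ is comparable to $\|a\|_{\mathrm{Mod}(F')}$. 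Composing the quasi-isometric embeddings $\mathrm{Mod}(F')\hookrightarrow H_0\hookrightarrow\mathrm{Mod}(F)$ completes the proof.

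The hard part will be the surface-topology bookkeeping rather than the coarse geometry: one must justify carefully the identification $H_0=\bigl(\mathrm{Mod}(F')\times\prod_i\mathrm{Mod}(R_i)\bigr)/N$ and the description of $N$ (the cutting homomorphism along $c$), confirm that $\Gamma$ really captures the stabilizer of $c$ up to finite index --- in particular, for nonorientable $F$, an element of $\Gamma$ may a priori reverse the orientation of some $c_j$ or interchange two complementary pieces, which is exactly why one passes to $H_0$ --- and dispose of the non-generic configurations of the $c_j$ (coincidences or peripheral members), all of which cost only finite indices and do not affect the conclusion. As an alternative for nonorientable $F$, one can instead pull back along the orientation double cover $j\colon S\to F$ as in the proof of Theorem~\ref{first_thm}: the preimage $S'=j^{-1}(F')$ is a $J$-invariant (possibly disconnected) subsurface of $S$ whose complementary pieces again have negative Euler characteristic, so $\mathrm{Mod}(S')\hookrightarrow\mathrm{Mod}(S)$ is a quasi-isometric embedding by Masur--Minsky~\cite{Masur-Minsky00}; combining this with the quasi-convexity of the centralizers of $[J|_{S'}]$ in $\mathrm{Mod}(S')$ and of $[J]$ in $\mathrm{Mod}^{\pm}(S)$, together with Lemma~\ref{injective_homomorphism_induced_by_orientation_double_cover}, again exhibits $\mathrm{Mod}(F')\hookrightarrow\mathrm{Mod}(F)$ as a composition of quasi-isometric embeddings.
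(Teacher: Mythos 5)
Your proposal is correct in outline and in fact aims at the stronger Proposition~\ref{qie_nonori_inc} directly (taking $H=\mathrm{Mod}(F)$), but it diverges from the paper at the decisive step. Both arguments begin identically: the components of $\partial F'\cap\mathrm{Int}(F)$ are generic two-sided curves, the intersection of the centralizers of their Dehn twists is $\sigma$-quasi-convex in the semihyperbolic mapping class group (the paper cites Bridson--Haefliger for intersections of quasi-convex subgroups rather than re-running Short's argument for a finite family, but your version works), and one passes to a finite-index subgroup to kill permutations of complementary pieces and sides. The difference is in how the complementary pieces $R_i$ are neutralized. The paper enlarges the centralized family: in each $R_i$ of large enough complexity it adds a \emph{filling pair} of curves (Lemma~\ref{filling_curves}), so that the resulting intersection of centralizers is virtually an honest direct product $\mathcal{T}(F')\times\mathbb{Z}^{r}$ (Lemma~\ref{realization_subsurface_mcg_for_centralizer}), and undistortion of $\mathcal{T}(F')$ is immediate because direct factors are undistorted. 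You instead keep the full groups $\mathrm{Mod}(R_i)$, identify $H_0$ with the central quotient $\bigl(\mathrm{Mod}(F')\times\prod_i\mathrm{Mod}(R_i)\bigr)/N$, and prove undistortion of $\mathrm{Mod}(F')$ there by the word-length estimate over the coset representatives $(a\,p(n),q(n))$; that estimate is sound (splitting into the cases $C\sum_j|n_j|\le\|a\|/2$ and $C\sum_j|n_j|>\|a\|/2$ gives a uniform linear lower bound), and it spares you the filling curves and the computations of the small twist subgroups in Lemma~\ref{twist_subgp}, at the cost of having to justify the kernel description of $\mathrm{Mod}(F')\times\prod_i\mathrm{Mod}(R_i)\to\mathrm{Mod}(F)$ via Paris--Rolfsen and Stukow --- bookkeeping comparable in weight to the paper's. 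Two caveats: your appeal to Farb--Lubotzky--Minsky for the undistortion of the boundary-twist subgroup (and of $\mathbb{Z}=\mathrm{Mod}(F')$ in the annulus case) is only literally available for orientable surfaces, so for nonorientable pieces you should instead use the semihyperbolicity of $\mathrm{Mod}(R_i)$ (Corollary~\ref{nonori_semihyp}) together with the undistortion of finitely generated abelian subgroups of semihyperbolic groups, exactly as the paper does in its annulus case; and you should verify that the boundary twists of each $R_i$ generate a free abelian group of full rank (true since $\chi(R_i)<0$ excludes annuli and M\"obius bands), as this is what makes $N$ meet $\mathrm{Mod}(F')\times\{1\}$ trivially.
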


To prove Lemma \ref{qie_2}, we prepare the following lemmas.

\begin{lemma}\label{filling_curves}
Let $F$ be a connected orientable or nonorientable surface of genus $g$ with $b \geq 1$
boundary components and $p$ punctures. 
We assume that $b+p \geq 4$ if $F$ is orientable and $g=0$. 
We also assume that $g+b+p \geq 4$ if $F$ is nonorientable. 
Then, there exists a pair $\{\alpha_{1}, \alpha_{2}\}$ of essential simple closed curves
satisfying the following properties. 
\begin{enumerate}
\item[(1)] If $F$ is nonorientable, then the closed curves $\alpha_{1}, \alpha_{2}$ are two-sided. 
\item[(2)] $F-(\mathrm{Int}N(\alpha_1) \cup \mathrm{Int}N(\alpha_2) )$ is a disjoint union
of some copies of $N_{1}^{1}$, $S_{0}^{1}$, $S_{0, 1}^{1}$, and $S_{0}^{2}$.
\end{enumerate}
\end{lemma}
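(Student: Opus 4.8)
The plan is to construct the curves $\alpha_1,\alpha_2$ by hand, choosing them so that they jointly fill enough of $F$ that the complementary regions are forced to be of the simple types listed in (2). First I would reduce to a normal form for $F$: since $b\geq 1$, I can think of $F$ as obtained from a disc (in the orientable case) or a disc with crosscaps (in the nonorientable case) by attaching handles, crosscaps, and by removing open discs and points corresponding to the boundary components and punctures. The hypotheses $b+p\geq 4$ (orientable, genus $0$) and $g+b+p\geq 4$ (nonorientable) are exactly the bounds ensuring that the "base" pieces available are rich enough to build curves whose complement only leaves pieces of Euler characteristic $-1$ of the four prescribed homeomorphism types, namely $N_1^1$, $S_0^1$ (a disc is not allowed since it is not essential — rather $S_0^1$ here means a pair of pants, i.e.\ $S_{0}^{3}$? no: $S_0^1$ is an annulus-with... wait) — more precisely the allowed pieces each have $\chi=-1$ except $S_0^2$ which has $\chi=0$; I would double-check which of these are degenerate and treat them uniformly as "small" subsurfaces.

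The key steps in order would be: (i) Handle the orientable case first. When $g\geq 1$, take $\alpha_1$ a non-separating curve through one handle and $\alpha_2$ a curve meeting it once, chosen so that cutting along both kills that handle and leaves the rest of the surface; iterate/adjust so that all remaining genus and all but a controlled number of boundary/puncture components are absorbed, using the $b+p\geq 4$ bound in the genus-$0$ case to guarantee the pants/annular leftovers. When $g=0$, arrange $\alpha_1,\alpha_2$ as two curves each enclosing two of the $b+p$ boundary-or-puncture components, so the complement is a union of pants and once-punctured/once-holed discs. (ii) Handle the nonorientable case. Here I must keep $\alpha_1,\alpha_2$ two-sided (condition (1)), so I cannot use a crosscap-piercing one-sided curve; instead I use two-sided curves bounding Möbius bands or enclosing pairs of crosscaps, together with handle-absorbing curves as in the orientable case. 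The bound $g+b+p\geq 4$ ensures there is enough material. (iii) In each case, verify property (2) by a direct Euler-characteristic-plus-classification argument: cut $F$ along regular neighborhoods $N(\alpha_1),N(\alpha_2)$, compute the Euler characteristic and the number of boundary circles of each complementary component, and match against the finite list of surfaces with $\chi\in\{0,-1\}$ and the appropriate boundary count; orientability of each piece is inherited or forced by whether an odd number of crosscaps was separated off. Also verify essentiality of $\alpha_1,\alpha_2$ (neither bounds a disc nor is boundary-parallel), which follows from the construction.

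The main obstacle I expect is the bookkeeping in the nonorientable case: ensuring simultaneously that both curves are two-sided, that they are essential, and that the complement decomposes into \emph{only} the four allowed types (in particular not producing a $N_1$ closed piece, a torus-with-boundary, or a larger pants-like piece). This is a finite but fiddly case analysis depending on the parities of $g$, and on how $b$ and $p$ split; the cleanest route is probably to fix a concrete model of $N_{g,p}^b$ as a polygon with side identifications (or as a disc with $g$ crosscaps, $b$ holes, $p$ marked points arranged in a standard pattern) and draw $\alpha_1,\alpha_2$ explicitly, then read off the complement from the picture. I would isolate a couple of small "base cases" (e.g.\ $N_3^1$, $N_{1,3}^1$, $S_{0,4}^1$ and the like, where $g+b+p=4$) and do those by an explicit figure, and then give an inductive "add a handle/crosscap/hole and absorb it into $\alpha_1$" step to reach all larger surfaces; the inductive step only needs to check that absorbing one more feature changes a complementary piece from one allowed type to another allowed type, which is a single local computation. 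Since we may invoke the earlier results freely, no further input beyond elementary surface topology is needed here — the lemma is purely a combinatorial-topological construction feeding into Lemma~\ref{qie_2}.
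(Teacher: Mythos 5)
Your plan has a genuine gap: you never actually produce the pair of curves, and the analysis of the complementary pieces you sketch is incorrect in two concrete ways. First, the bookkeeping of the allowed pieces is off: $S_0^1$ is a disk ($\chi=1$), and $N_1^1$, $S_{0,1}^1$, $S_0^2$ all have $\chi=0$; none of the allowed pieces has $\chi=-1$. The content of condition (2) is precisely that $\{\alpha_1,\alpha_2\}$ \emph{fills} $F$, so that the complement consists only of disks, once-punctured disks, peripheral annuli, and (in the nonorientable case) M\"obius bands. Second, your genus-zero construction --- two curves each enclosing two of the $b+p$ boundary-or-puncture components, ``so the complement is a union of pants and once-punctured/once-holed discs'' --- does not satisfy (2), since a pair of pants is not among the allowed pieces; if pants survive in the complement the curves do not fill. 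A correct version needs the two curves to intersect enough that every complementary component is trivial, and that is exactly the part your proposal defers to ``explicit figures'' and an unverified inductive step. As written, the existence statement is not established.

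For comparison, the paper avoids all of this case analysis: for orientable $F$ it invokes the infinite diameter of the curve complex (Masur--Minsky) to obtain two curves at large distance, hence a filling pair, whose complement is automatically a disjoint union of copies of $S_0^1$, $S_{0,1}^1$, and $S_0^2$; for nonorientable $F$ it replaces crosscaps by punctures, finds the filling pair on the resulting orientable surface, and then restores the crosscaps --- the curves avoid the punctures, hence avoid the crosscaps and are two-sided, and each once-punctured disk containing a restored crosscap becomes a copy of $N_1^1$. If you want to salvage your hands-on approach, you would need to exhibit explicit filling pairs (with all complementary components trivial) in the base cases and verify that your inductive modification preserves the filling property; alternatively, quoting the curve-complex argument collapses the whole construction to a few lines.
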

\begin{proof}[Proof of Lemma~\ref{filling_curves}]
Suppose that $F$ is orientable.
Then, the curve complex of $F$ has infinite diameter (see Masur--Minsky~\cite[Theorem
1.1]{Masur-Minsky99}). 
This implies that $F$ has a pair $\{\alpha_{1}, \alpha_{2}\}$ of essential simple closed
curves satisfying condition (2) in Lemma~\ref{filling_curves}. 
We can reduce the case where $F$ is nonorientable to the case where $F$ is orientable by
replacing some of the punctures with crosscaps. 
Then, the pair of closed curves do not pass through a crosscap and each closed curve is two-sided, thereby satisfying condition (1). 
\end{proof}

Let $F$ be a surface.
A closed curve $\beta$ on $F$ is called {\it peripheral} if $\beta$ is isotopic to a component of $\partial F$. 
A two-sided closed curve $\alpha$ on $F$ is called {\it generic} if $\alpha$ bounds neither a disk nor a M\"{o}bius strip and is not peripheral.
Let $\mathcal{T}(F)$ denote the subgroup of $\mathrm{Mod}(F)$, called the {\it twist
subgroup}, generated by Dehn twists along two-sided closed curves which are either
peripheral or generic on $F$. 

\begin{lemma}\label{twist_subgp}
We have the following.
\begin{enumerate}
\item[(1)] $\mathcal{T}(N_{1,1}^{1}) \cong \mathbb{Z}$, and its generator is a Dehn twist
along a unique peripheral closed curve.
\item[(2)] $\mathcal{T}(N_{1}^{2}) \cong \mathbb{Z}^2$, and its generators are Dehn
twists along peripheral closed curves.
\item[(3)] $\mathcal{T}(N_{2}^{1}) \cong \mathbb{Z}^2$, and its generators are a Dehn
twist along a unique peripheral closed curve and a Dehn twist along a unique generic closed curve on $N_{2}^{1}$.
\end{enumerate}
\end{lemma}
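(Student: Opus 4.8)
The plan is to treat $N_{1,1}^1$, $N_1^2$, and $N_2^1$ uniformly. First I would enumerate the two-sided simple closed curves on each surface in order to identify the relevant generators of $\mathcal T$; then read off the commuting relations from disjointness; and finally pin down the isomorphism type by pushing the relevant twists into the orientation double cover, where the classical fact that Dehn twists about disjoint, pairwise non-isotopic, essential simple closed curves on an \emph{orientable} surface freely generate a free abelian group is available.

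For the enumeration, note that all three surfaces have Euler characteristic $-1$. Given a two-sided simple closed curve $\alpha$ on one of them, one cuts along $\alpha$ and identifies the resulting (possibly disconnected) surface from its Euler characteristic, number of boundary components, and number of punctures, using the short list of surfaces with these invariants and $\chi \geq -1$. A routine case analysis then shows that on $N_{1,1}^1$ and $N_1^2$ every two-sided $\alpha$ either bounds a disk, bounds a once-punctured disk, bounds a M\"obius band, or is peripheral, whereas on $N_2^1$ the only additional possibility is a two-sided non-separating curve $c$ whose complement is a pair of pants reglued along two of its boundary circles in the orientation-reversing manner; such a $c$ is moreover unique up to isotopy, because the two-sided non-separating curve on the Klein bottle $N_2$ is unique up to isotopy and a disk in the complementary annulus is unique up to isotopy. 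Since a Dehn twist about a curve bounding a disk or a once-punctured disk is trivial, and curves bounding a M\"obius band are excluded from the generating set of $\mathcal T$ by definition, this reduces the problem to showing $\mathcal T(N_{1,1}^1) = \langle T_\partial\rangle \cong \mathbb Z$, $\mathcal T(N_1^2) = \langle T_{\partial_1}, T_{\partial_2}\rangle \cong \mathbb Z^2$, and $\mathcal T(N_2^1) = \langle T_\partial, T_c\rangle \cong \mathbb Z^2$, with $\partial, \partial_1, \partial_2$ the peripheral curves.

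In each case the listed generating curves are pairwise disjoint, so the corresponding twists commute, and $\mathcal T$ is accordingly a quotient of $\mathbb Z$ or of $\mathbb Z^2$; it remains to see there are no further relations. None of $N_{1,1}^1, N_1^2, N_2^1$ lies in the exceptional list, so Lemma~\ref{injective_homomorphism_induced_by_orientation_double_cover} gives an injective homomorphism $\iota$ from $\mathrm{Mod}(N)$ into the mapping class group of the orientation double cover $S$ (which is $S_{0,2}^2$, $S_0^4$, and $S_1^2$, respectively). A two-sided curve $\alpha$ on $N$ lifts to two disjoint curves $\alpha^+, \alpha^-$ on $S$, and $\iota(T_\alpha) = T_{\alpha^+} T_{\alpha^-}$. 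The lifts of the generating curves of $\mathcal T$ then form a family of disjoint simple closed curves on $S$ which, up to one harmless coincidence, are pairwise non-isotopic (distinct boundary components of $S$ are non-isotopic; the lifts of $c$ are not boundary-parallel because $c$ is not; and should the two lifts of $c$ be isotopic one works with one fewer curve) and none of which bounds a disk or a once-punctured disk. Hence the Dehn twists about these lifted curves freely generate a free abelian subgroup of $\mathrm{Mod}(S)$ (see Farb--Lubotzky--Minsky~\cite{Farb-Lubotzky-Minsky01}). The elements $\iota(T_{\partial_i})$ and $\iota(T_c)$ are products of twists with pairwise disjoint supports in this free abelian group, hence linearly independent, so $\iota$ restricted to $\mathcal T(N)$ is injective with image $\mathbb Z$ or $\mathbb Z^2$ as required; the descriptions of the generators in (1)--(3) then follow from the enumeration.

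The heart of the argument is ruling out ``chain-type'' relations of the form $T_c^{2} = T_\partial^{k}$: such relations really do occur on closed nonorientable surfaces (for instance $T_c^{2} = 1$ in $\mathrm{Mod}(N_2)$), so there is something genuine to prove. Passing to the orientation double cover is exactly what dissolves this, since there the twisting curves become disjoint curves on an \emph{orientable} surface, where the independence of such twists is classical. The remaining delicate point is the completeness of the curve enumeration, in particular the uniqueness up to isotopy of the generic curve on $N_2^1$; this is where the low complexity of the three surfaces is essential.
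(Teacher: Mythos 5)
Your argument is correct in outline but follows a genuinely different route from the paper. The paper reads each group off from known presentations: for $N_{1,1}^{1}$ and $N_{2}^{1}$ it quotes Paris's Propositions 17 and 22 ($\mathrm{Mod}(N_{1,1}^{1})\cong\mathbb{Z}$ generated by a boundary slide $s$ with $s^{2}=T_{\partial}$, and $\mathrm{Mod}(N_{2}^{1})\cong\mathbb{Z}\rtimes\mathbb{Z}$ generated by $T_{c}$ and a crosscap slide $y$ with $y^{2}=T_{\partial}$), and for $N_{1}^{2}$ it uses the capping homomorphism onto $\mathrm{Mod}(N_{1,1}^{1})$, whose kernel is $\langle T_{C}\rangle$. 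You instead enumerate the two-sided curves directly and certify the absence of relations by pushing into the orientation double cover, where independence of twists about disjoint non-isotopic curves is classical. Your route is more self-contained (it does not need the presentations of these small mapping class groups) and fits the theme of the paper by reusing Lemma~\ref{injective_homomorphism_induced_by_orientation_double_cover}; the price is that you must carry out the curve enumeration and the uniqueness of the generic curve on $N_{2}^{1}$ yourself, which is exactly what the citations to Paris absorb in the paper's proof.

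Two points need attention. First, the lift formula is wrong as stated: since the deck transformation $J$ is orientation-reversing and interchanges the two lifts, the orientation-preserving lift of $T_{\alpha}$ is $T_{\alpha^{+}}T_{\alpha^{-}}^{-1}$, not $T_{\alpha^{+}}T_{\alpha^{-}}$ (this is the form in Birman--Chillingworth and Szepietowski). The independence conclusion survives, since $(1,-1,0,0)$ and $(0,0,1,-1)$ are still independent in the free abelian group on the four lifted twists; but your contingency ``should the two lifts of $c$ be isotopic one works with one fewer curve'' is no longer available --- with the correct sign that case would give $\iota(T_{c})=1$, contradicting injectivity rather than salvaging the argument. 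You should rule the case out directly: the preimage of the pair of pants $N_{2}^{1}-\mathrm{Int}N(c)$ is a trivial double cover, so $c^{+}\cup c^{-}$ cuts $S_{1}^{2}$ into two pairs of pants; in particular $c^{+}$ and $c^{-}$ do not cobound an annulus and hence are not isotopic. Second, your uniqueness argument for $c$ (cap off to $N_{2}$, quote uniqueness there, choose the disk) elides the step from ``isotopic after capping'' back to ``isotopic in $N_{2}^{1}$,'' which is where point-pushing phenomena could in principle intervene; this needs either a careful isotopy-extension argument or a citation to the classification of curves on $N_{2}^{1}$ (it is contained in the sources behind Paris's Proposition 22, which is what the paper itself leans on).
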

\begin{proof}[Proof of Lemma~\ref{twist_subgp}]
According to \cite[Propositions 17]{Paris14}, $\mathrm{Mod}(N_{1,1}^{1}) \cong
\mathbb{Z}$ and is generated by a ``boundary slide" $s$. 
As the square of $s$ is isotopic to a Dehn twist along a unique peripheral closed curve on
$N_{1,1}^{1}$, the twist subgroup $\mathcal{T}(N_{1,1}^{1})$ is generated by a Dehn twist. 

To obtain an isomorphism $\mathbb{Z}^2 \rightarrow \mathcal{T}(N_{1}^{2})$, we use the
capping homomorphism $\mathrm{Mod}(N_{1}^{2}) \rightarrow \mathrm{Mod}(N_{1,1}^{1})$ induced by gluing $N_{1}^{2}$ with a punctured disk along a boundary component $C$ of $N_{1}^{2}$. 
Then, the kernel of the capping homomorphism is generated by a Dehn twist along a closed
curve isotopic to $C$. 
Additionally, the image of a Dehn twist along a peripheral closed curve on $N_{1}^{2}$
which is not isotopic to $C$ is $s^2$. 
Hence, $\mathcal{T}(N_{1,0}^{2})$ is freely generated by Dehn twists along those
peripheral closed curves. 
By \cite[Propositions 22]{Paris14}, we have $\mathrm{Mod}(N_{2}^{1}) \cong \mathbb{Z} \rtimes \mathbb{Z}$. 
In addition, the first copy of $\mathbb{Z}$ is generated by a Dehn twist along a unique
generic closed curve on $N_{2}^{1}$ and the second copy is generated by a ``crosscap slide" $y$. 
As the square of $y$ is isotopic to a Dehn twist along a peripheral closed curve on
$N_{2,0}^{1}$, $\mathcal{T}(N_{2}^{1})$ is freely generated by those Dehn twists. 
\end{proof}

The next lemma asserts that the mapping class group of any ``essential" subsurface, excepting a few examples, is virtually isomorphic to a direct factor of a $\sigma$-quasi-convex subgroup of the ambient mapping class group. 

\begin{lemma}\label{realization_subsurface_mcg_for_centralizer}
Let $F$ be a connected orientable or nonorientable surface and $F' \subset F$ be an
admissible connected subsurface which is not an annulus. 
Suppose that $\mathrm{Mod}(F') \neq 1$ and that every connected component of $F-
\mathrm{Int}(F')$ has a negative Euler characteristic. 
Then, there exist mapping classes $\varphi_1, \ldots, \varphi_l \in \mathcal{T}(F) $ such that a finite-index subgroup of $\cap_{i=1}^{l}Z_{\mathcal{T}(F)}(\varphi_i)$ is isomorphic to $\mathcal{T}(F') \times \mathbb{Z}^{r}$. 
\end{lemma}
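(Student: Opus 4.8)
The plan is to realize $\mathcal{T}(F')$ as (virtually) a direct factor of the intersection of centralizers in $\mathcal{T}(F)$ of finitely many Dehn twists supported in $F - \mathrm{Int}(F')$. The idea is that if we record a system of curves $\alpha_1, \ldots, \alpha_l$ lying in the complement $F - \mathrm{Int}(F')$ that ``fill'' this complement well enough, then $\cap_{i=1}^{l} Z_{\mathcal{T}(F)}(T_{\alpha_i})$ consists (up to finite index) of mapping classes that preserve each $\alpha_i$ and hence preserve a regular neighborhood of $\cup \alpha_i$; by the filling property such a mapping class must, up to finite index, restrict to something supported on $F'$ together with a collection of twists along the remaining curves, which accounts for the $\mathbb{Z}^r$ factor.

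First I would invoke Lemma~\ref{filling_curves} on each connected component $C$ of $F - \mathrm{Int}(F')$: since every such $C$ has negative Euler characteristic and (after possibly absorbing trivial cases) satisfies the hypotheses there, we get a pair of two-sided essential simple closed curves $\{\alpha_1^C, \alpha_2^C\}$ cutting $C$ into copies of $N_1^1$, $S_0^1$, $S_{0,1}^1$, and $S_0^2$. Collecting these over all components gives the finite family $\{\alpha_i\}_{i=1}^{l}$, and I set $\varphi_i := T_{\alpha_i} \in \mathcal{T}(F)$ (these curves are generic or peripheral in $F$, so the $\varphi_i$ genuinely lie in the twist subgroup). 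The next step is the structural analysis of $G := \cap_{i=1}^{l} Z_{\mathcal{T}(F)}(\varphi_i)$: an element commuting with $T_{\alpha_i}$ must, up to a uniformly bounded power, fix the isotopy class of $\alpha_i$ (this is the standard fact that the centralizer of a Dehn twist is, up to finite index, the stabilizer of the curve). Passing to the finite-index subgroup $G_0 \leq G$ that fixes all $\alpha_i$ and moreover preserves the side of each, one uses the cutting-along argument: such a mapping class descends to a mapping class of $F$ cut along $\cup_i N(\alpha_i)$, whose complementary pieces in $F - \mathrm{Int}(F')$ are the low-complexity surfaces from Lemma~\ref{filling_curves}. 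The twist subgroups of those pieces were computed in Lemma~\ref{twist_subgp}, and their mapping class groups are finite or nearly so, so modulo finite index the only freedom in the complement is twisting along the $\alpha_i$ themselves and along the finitely many boundary/generic curves produced by the cutting — this is precisely the $\mathbb{Z}^r$ factor. What survives on the $F'$ side is exactly $\mathcal{T}(F')$, and since the supports are disjoint the two factors commute and intersect trivially, giving the desired isomorphism $\mathcal{T}(F') \times \mathbb{Z}^r$ onto a finite-index subgroup of $G$.

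The main obstacle I anticipate is the bookkeeping in the cutting argument: I must verify that an element of $G_0$ really does restrict to a well-defined mapping class of $F'$ (i.e.\ that it can be isotoped to be supported off a neighborhood of $\cup_i \alpha_i$, not merely to preserve that neighborhood), and that conversely every element of $\mathcal{T}(F')$ arises this way — in particular that $\mathcal{T}(F')$, extended by the identity, actually lands inside $Z_{\mathcal{T}(F)}(\varphi_i)$ for every $i$ and not just in $\mathrm{Mod}(F)$. The first point requires the Alexander method / change-of-coordinates to upgrade ``preserves the neighborhood'' to ``supported in the complement of the neighborhood,'' at the cost of passing to a further finite-index subgroup to kill the residual action on the product-region annuli. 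The second point is where the low-complexity classification from Lemma~\ref{twist_subgp} does real work: one must check that the complementary pieces $N_1^1$, $S_0^1$, $S_{0,1}^1$, $S_0^2$ contribute nothing beyond finite order and the recorded $\mathbb{Z}^r$, and that no unexpected mapping classes of the complement commute with all the $\varphi_i$ while failing to preserve $F'$. Once these verifications are in place, the isomorphism onto a finite-index subgroup of $\cap_{i=1}^{l} Z_{\mathcal{T}(F)}(\varphi_i)$ follows, and I would remark that the excluded ``non-essential'' configurations (annuli, or components violating the hypotheses of Lemma~\ref{filling_curves}) are exactly the exceptional cases barred by the statement.
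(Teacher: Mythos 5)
Your overall strategy (centralizers of twists about curves filling the complement, then a cutting argument) is the same as the paper's, but there is a genuine gap in how you choose the curve system, and it breaks the proof in concrete cases. You apply Lemma~\ref{filling_curves} to \emph{every} component $C$ of $F-\mathrm{Int}(F')$, asserting that negative Euler characteristic suffices ``after possibly absorbing trivial cases.'' It does not: a component that is a pair of pants (orientable, $g=0$, $b+p=3$), or a nonorientable component with $g+b+p=3$ (i.e.\ $N_{1,1}^1$, $N_1^2$, $N_2^1$), has negative Euler characteristic but fails the hypotheses of Lemma~\ref{filling_curves}, and indeed contains no filling pair --- a pair of pants has no essential non-peripheral simple closed curve at all. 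These are not trivial cases to be absorbed; they are exactly why the paper splits the components into cases (a)--(d) and why Lemma~\ref{twist_subgp} is proved (note that Lemma~\ref{twist_subgp} computes the twist subgroups of these exceptional \emph{components}, not of the pieces $N_1^1, S_0^1, S_{0,1}^1, S_0^2$ left over after cutting along a filling pair, so your later citation of it is aimed at the wrong surfaces). For such a component your recipe produces no curves, so nothing in your intersection of centralizers constrains it. Concretely, if $F=S_{1,2}$ and $F'$ is a one-holed torus, the complement is $S_{0,2}^1$, your family is empty, and $\cap_{i=1}^{0}Z_{\mathcal{T}(F)}(\varphi_i)=\mathcal{T}(F)$, which is not virtually $\mathcal{T}(F')\times\mathbb{Z}^r$. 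The missing ingredient is the paper's inclusion, in every case, of the curves of each $F_i$ parallel to $\partial F'$ in the set $A$ of curves whose twists are centralized; these boundary twists are what force elements of the centralizer to preserve $F'$ when no filling pair is available.

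Two smaller points. First, your description of the $\mathbb{Z}^r$ factor as including ``twisting along the $\alpha_i$ themselves'' is wrong: two curves that fill a component of negative Euler characteristic must intersect, so their twists do not commute and neither lies in the intersection of centralizers. In the paper, $r$ counts the boundary components of $F$ lying in the complement plus the components homeomorphic to $N_2^1$ (whose unique generic two-sided curve contributes a twist); these generate the $B_i$ factors. Second, your claim that the centralizer of a Dehn twist is ``up to finite index'' the stabilizer of the curve understates the standard fact (it is contained in the stabilizer outright, via $fT_\alpha f^{-1}=T_{f(\alpha)}^{\pm1}$); the finite-index passage is needed instead to fix orientations/sides of the curves, which you do correctly identify. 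Once you add the $\partial F'$-parallel curves to your system and run the case analysis for the exceptional components using Lemma~\ref{twist_subgp}, your argument aligns with the paper's.
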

Here, $Z_{\mathcal{T}(F)}(\varphi_i)$ is the centralizer of $\varphi_i$ in $\mathcal{T}(F)$ and the index $r$ in Lemma~\ref{realization_subsurface_mcg_for_centralizer} is the sum of the number of boundary components of $F$ which are not contained in $F'$ and the number of connected components of $F-\mathrm{Int}(F')$ which are homeomorphic to a one-holed
Klein bottle. 

\begin{proof}[Proof of Lemma~\ref{realization_subsurface_mcg_for_centralizer}]
Let $F_1, \ldots, F_n$ be the connected components of $F-\mathrm{Int}F'$. 
We denote the genus of $F_i$, the number of boundary components of $F_i$, and the number
of punctures of $F_i$ as $g(F_i)$, $b(F_i)$, and $p(F_i)$, respectively. 
As the Euler characteristic of $F_i$ is negative, $F_i$ satisfies exactly one of the following conditions: 
\begin{enumerate}
\item[(a)] $F_i$ is orientable and either $g(F_i) \geq 1$ or $b(F_i)+p(F_i) \geq 4$.
\item[(b)] $F_i$ is orientable, $g(F_i)=0$, and $b(F_i)+p(F_i) = 3$.
\item[(c)] $F_i$ is nonorientable and $g(F_i) + b(F_i)+p(F_i) \geq 4$.
\item[(d)] $F_i$ is nonorientable and $g(F_i) + b(F_i)+p(F_i) = 3$.
\end{enumerate}
If $F_i$ satisfies condition (a) or (c), we have a pair $P_i$ of essential closed curves which fills $F_i$ in the sense of Lemma~\ref{filling_curves}. 
We define a set of closed curves $A_i$ to be a union of $P$ and the set of closed curves of $F_i$ which are parallel to $\partial F'$. 
In the case where $F_i$ satisfies condition (b) or (d), the set $A_i$ is defined to be the set of closed curves of $F_i$ which are parallel to $\partial F'$. 
Set $\varphi_{\alpha}:= [T_{\alpha}]$ for each $\alpha \in A:= \cup_{i=1}^{n}A_i$ and set
$$ B_{i}:=
\begin{cases}
\langle [T_{\beta}] \mid \beta \in \partial F \cap \partial F_i \rangle & (\mbox{if} \ F_i \not
\cong N_{2}^{1}) \\
\langle [T_{\gamma}] \mid \gamma \ \mbox{is a two sided generic closed curve} \rangle & 
(\mbox{if} \ F_i \cong N_{2}^{1}) \end{cases}
. $$
Note that $B_i \cong \mathbb{Z}$ when $F_i \cong N_{2}^{1}$. 
Consider a subgroup $(\mathrm{Mod}(F')B_1 \cdots B_n) \cap \mathcal{T}(F)$ of
$\mathcal{T}(F)$. 
As $F'$ is not an annulus and $\mathrm{Mod}(F') \neq 1$, for each component $C$ of
$\partial F' \cap \partial F_i$, there exists a two-sided essential closed curve $\gamma_{C}$ in $F$ such that $\gamma_{C}$ intersects $C$ non-trivially in minimal position and is disjoint from $(\partial F \cup \partial F') - \{ C \}$ (and a unique two-sided generic closed curve on $F_i$ if $F_i \cong N_{2}^{1}$). 
As all elements in $B_i$ are commutative with $[T_{\gamma_C}]$, we have $\mathrm{Mod}(F') \cap B_i = 1$. 
Therefore, $(\mathrm{Mod}(F') B_1 \cdots B_n ) \cap \mathcal{T}(F) = \mathcal{T}(F') B_1 \cdots B_n \cong \mathcal{T}(F') \times \mathbb{Z}^{r}$, where $r$ is the sum of the free abelian rank of $B_1, \ldots, B_n$ and is equal to the sum of the number of boundary components of $F$ which are not
contained in $F'$ and the number of connected components of $F-\mathrm{Int}F'$ which are
homeomorphic to $N_{2}^{1}$. 
In addition, it clearly holds that
$$(\mathrm{Mod}(F') B_1 \cdots B_n) \cap \mathcal{T}(F) \subset \cap_{\alpha \in A}
Z_{\mathcal{T}(F)}(\varphi_{\alpha}) .$$
To simplify the notation, we denote $\cap_{\alpha \in A} Z_{\mathcal{T}(F)}(\varphi_{\alpha})$ by $Z$. 

We now claim that $(\mathrm{Mod}(F')B_1 \cdots B_n) \cap \mathcal{T}(F)$ is a finite-
index subgroup of $Z$. 
To see this, consider a subset $\mathcal{S}$ of $Z$ realizing all possible reversing patterns on orientations of closed curves in $A$. 
If there is no element of $Z$ which reverses an orientation of a closed curve in $A$, we set $\mathcal{S}= \{ 1 \}$. 
As $A$ is finite, we can choose $\mathcal{S}$ to be finite. 
Pick an element $f$ in $Z$. 
Then, $f$ preserves each closed curve in $A$, and so there exists an element $s \in
\mathcal{S}$ such that $sf$ fixes an orientation of each closed curves in $A$. 
In the following, we prove that $sf \in (\mathrm{Mod}(F')B_1 \cdots B_n) \cap
\mathcal{T}(F)$. 
This immediately implies that $(\mathrm{Mod}(F')B_1 \cdots B_n) \cap \mathcal{T}(F)$ is
of finite index in $Z$. 
As $sf$ fixes an orientation of each closed curve in $A$, $sf$ can be decomposed as a
product of mapping classes of the regular neighbourhood $N(A)$ of $A$ and $F - \mathrm{Int}N(A)$. 
By Lemma~\ref{filling_curves}, $F - \mathrm{Int}N(A)$ is a disjoint union of $F'$, outer
surfaces $F_i$ satisfying condition (b) or (d), and some copies of $S_{0}^{1}$, $S_{0
,1}^{1}$, $S_{0}^{2}$, $N_{1}^{1}$. 
Obviously, $sf |_{F'}$ is contained in $\mathrm{Mod}(F')$. 
Additionally, if $F_i$ satisfies condition (b) or (d), we have that $sf |_{F_i}$ is contained in $\mathrm{Mod}(F') B_i$ by Lemma~\ref{twist_subgp} and the fact that
$\mathrm{Mod}(F_i)$ is an abelian group freely generated by Dehn twists along peripheral
closed curves if $F_i$ satisfies condition (b). 
Note that the copies of $S_{0}^{2}$ are in one-to-one correspondence with the components
of $\cup_{i=1}^{n} \partial F_i$. 
Hence, the restriction of $sf$ to the copies of $S_{0}^{1}$, $S_{0 ,1}^{1}$, $S_{0}^{2}$,
and $N_{1}^{1}$ in $F - \mathrm{Int}N(A)$ is contained in $\mathrm{Mod}(N(\cup_{i=1}^{n} \partial F_i)) \subset \mathrm{Mod}(F')B_1 \cdots B_n$ by Alexander's theorem and Epstein's theorem~\cite[Proposition 5]{Paris14}. 
Therefore, we have that $sf |_{F - \mathrm{Int}N(A)} \in \mathrm{Mod}(F')B_1 \cdots
B_n$. 
Furthermore, we can verify that $sf |_{N(A)}$ is contained in $\mathrm{Mod}(F')$. 
To see this, we use the fact that $sf$ and $sf |_{F - \mathrm{Int}N(A)}$ are commutative
with all of $\varphi_{\alpha}$ ($\alpha \in A$). 
As $sf |_{N(A)} = sf \cdot (sf |_{F - \mathrm{Int}N(A)})^{-1}$, the restriction $sf |_{N(A)}$ is also commutative with all of $\varphi_{\alpha}$ ($\alpha \in A$). 
If $F_i$ satisfies condition (b) or (d), the restriction of $sf |_{N(A)}$ to $F_i$ is contained in $\mathrm{Mod}(F')$, because $A_i \subset F'$. 
If $F_i$ satisfies condition (a) or (c), the restriction of $sf |_{N(A)}$ to $F_i$ is contained in $\mathrm{Mod}(F')$, because $sf |_{N(A)}$ should be trivial on the regular neighbourhood of the filling pair $P_i$.
Therefore, $sf |_{N(A)} \in \mathrm{Mod}(F')$, and so $sf \in \mathrm{Mod}(F')B_1 \cdots
B_n$. 
Because $sf \in \mathcal{T}(F)$, we have that $sf \in (\mathrm{Mod}(F')B_1 \cdots B_n)
\cap \mathcal{T}(F)$, as desired. 
\end{proof}

\begin{remark}
Assume that $F=S_{0,p+2}$ and $F'=S_{0,p}^{1}$ in Lemma~\ref{realization_subsurface_mcg_for_centralizer}. 
If $p \geq 3$, we can conclude that the braid
group on $p$-strands coincides with $Z([T_{\beta}])$ in $\mathrm{Mod}(S_{0,p+2})$
because $\mathcal{S} = \{ 1 \}$. 
Here, $\beta$ is a peripheral closed curve parallel to $\partial F'$. 
\end{remark}

We are now ready to prove Lemma~\ref{qie_2}.
Recall that the mapping class group of an orientable or nonorientable surface is
semihyperbolic, and the intersection of two quasi-convex subgroups is also quasi-convex with
respect to a semihyperbolic structure (cf. Bridson--Haefliger~\cite[Proposition 4.13, Chapter
I\hspace{-1pt}I\hspace{-1pt}I.$\Gamma$]{Bridson-Haefliger99}). 

\begin{proof}[Proof of Lemma~\ref{qie_2}]
We first consider the case where $\mathrm{Mod}(F')=1$. 
In this case, Proposition~\ref{qie_nonori_inc} is trivial. 
We next consider the case where $F'$ is an annulus. 
Then, Proposition~\ref{qie_nonori_inc} can be obtained by using the semihyperbolicity of
$\mathrm{Mod}(F)$, because any finitely generated abelian subgroup is quasi-isometrically
embedded in a semihyperbolic group (see Bridson--Haefliger~\cite[Theorem 4.10, Chapter
I\hspace{-1pt}I\hspace{-1pt}I.$\Gamma$]{Bridson-Haefliger99}). 
We now assume that $\mathrm{Mod}(F') \neq 1$ and $F'$ is not an annulus. 
By Lemma~\ref{realization_subsurface_mcg_for_centralizer}, there exist mapping classes
$\varphi_1, \ldots, \varphi_l \in \mathcal{T}(F)$ and a non-negative number $r$ such that
$\mathcal{T}(F') \times \mathbb{Z}^{r}$ is naturally embedded in $\cap_{i=1}^{l}
Z_{\mathcal{T}(F)}(\varphi_i)$ as a finite-index subgroup. 
Lickorish~\cite{Lickorish63} proved that $\mathcal{T}(F)$ is a finite-index subgroup of
$\mathrm{Mod}(F)$ if $F$ is closed. 
Because $F$ is either closed or an admissible subsurface of some closed nonorientable
surface, Lickorish's theorem together with Paris--Rolfsen~\cite{Paris-Rolfsen00} and
Stukow~\cite{Stukow09} implies that $\mathcal{T}(F)$ is a finite-index subgroup of
$\mathrm{Mod}(F)$. 
Let $\sigma$ be a semihyperbolic structure of $\mathcal{T}(F)$.
As each direct factor is quasi-isometrically embedded in a given direct product, the subgroup $\mathcal{T}(F')$ is quasi-isometrically embedded in $\cap_{i=1}^{l}
Z_{\mathcal{T}(F)}(\varphi_i)$. 
We then obtain the result that the inclusion map from $\mathcal{T}(F')$ to $\mathcal{T}(F)$ is a quasi-isometric embedding by the $\sigma$-quasi-convexity of $\cap_{i=1}^{l}
Z_{\mathcal{T}(F)}(\varphi_i)$. 
\end{proof}

Finally, we remark that for closed surfaces, hyperelliptic mapping class groups are also undistorted subgroups because they are centralizers of the mapping class groups (see Stukow~\cite{Stukow15} for the definition of hyperelliptic mapping class groups of closed nonorientable surfaces).


\begin{thebibliography}{99}
\bibitem{Alonso-Bridson95}
J. M. Alonso and M. R. Bridson, {\it Semihyperbolic groups}, Proc.\ London Math.\ Soc.\ (3)
{\bf 70} (1995), no. 1, 56--114.
\bibitem{Birman-Chillingworth72}
J. S. Birman and D. R. J. Chillingworth, {\it On the homeotopy group of a non-orientable
surface}, Proc. Cambridge Philos. Soc. {\bf 71} (1972), 437--448.
\bibitem{Bridson-Haefliger99}
M. R. Bridson and A. Haefliger, Metric spaces of non-positive curvature, Springer-Verlag,
Berlin, 1999.
\bibitem{Broaddus-Farb-Putman11}
N. Broaddus, B Farb, and A. Putman, {\it Irreducible Sp-representations and subgroup
distortion in the mapping class group}, Comment. Math. Helv. {\bf 86} (2011), no. 3, 537--
556.
\bibitem{Cohen14}
D. B. Cohen, {\it A Dehn function for ${\rm Sp}(2n; {\mathbb Z})$}, J.\ Topol.\ Anal. {\bf
9} (2017), no. 2, 225--290.
\bibitem{Durham-Minsky-Sisto20}
M. G. Durham, Y. N. Minsky, A. Sisto, {\it Stable cubulations, bicombings, and barycenters},
arXiv: 2009.13647.
\bibitem{Farb-Lubotzky-Minsky01}
B. Farb, A. Lubotzky, and Y. Minsky, {\it Rank-1 phenomena for mapping class groups},
Duke Math.\ J.\ {\bf 106} (2001), no. 3, 581--597.
\bibitem{Goncalves-Guaschi-Maldonado18}
D. L. Gon\c{c}alves, and J. Guaschi, M. Maldonado, {\it Embeddings and the (virtual)
cohomological dimension of the braid and mapping class groups of surfaces}, Confluentes
Math. {\bf 10} (2018), no. 1, 41--61.
\bibitem{Haettel-Hoda-Petyt20}
T. Haettel, N. Hoda, and H. Petyt, {\it The coarse Helly property, hierarchical hyperbolicity,
and semihyperbolicity}, arXiv:2009.14053. 
\bibitem{Hamenstad09b}
U. Hamenst\"{a}dt, {\it Geometry of the mapping class group II: A biautomatic structure},
arXiv:0912.0137.
\bibitem{Hamenstad10}
U. Hamenst\"{a}dt, {\it Actions of the mappings class group}, Proceedings of the
International Congress of Mathematicians. Volume II, 1002-1021, Hindustan Book Agency,
New Delhi, 2010.
\bibitem{Hamenstad-Hensel12}
U. Hamenst\"{a}dt and S. Hensel, {\it The geometry of the handlebody groups I: Distortion},
J. Topol. Anal. {\bf 4} (2012), no. 1, 71--97.
\bibitem{Kuno-Omori17}
E. Kuno and G. Omori, {\it A lower bound of the distortion of the Torelli group in the
mapping class group with boundary components}, J.\ Knot Theory Ramifications {\bf 26}
(2017), no. 8, 1750049, 4 pp. 
\bibitem{Lickorish63}
W. B. R. Lickorish, {\it Homeomorphisms of non-orientable two-manifolds}, Proc.
Cambridge Philos. Soc. {\bf 59} (1963), 307--317.
\bibitem{Masur-Minsky99}
H. A. Masur and Y. N. Minsky, {\it Geometry of the complex of curves. I. Hyperbolicity},
Invent.\ Math.\ {\bf 138} (1999), no. 1, 103--149.
\bibitem{Masur-Minsky00}
H. A. Masur and Y. N. Minsky, {\it Geometry of the complex of curves II. Hierarchical
structure}, Geom. Funct. Anal. {\bf 10} (2000), 902--974.
\bibitem{Paris14}
L. Paris, {\it Mapping class groups of non-orientable surfaces for beginners},
Winter Braids Lect.\ Notes 1 (2014), Winter Braids I\hspace{-.1em}V (Dijon, 2014), Exp. No.3, 17 pp.
\bibitem{Paris-Rolfsen00}
L. Paris and D. Rolfsen, {\it Geometric subgroups of mapping class groups}, J.\ Reine.\
Angew Math.\ {\bf 521} (2000), 47--83.
\bibitem{Rafi-Schleimer09}
K. Rafi and S. Schleimer, {\it Covers and the curve complex}, Geom. Topol. {\bf 13} (2009),
no. 4, 2141--2162.
\bibitem{Stukow09}
M. Stukow, {\it Commensurability of geometric subgroups of mapping class groups}, Geom.\
Dedicata {\bf 143} (2009), 117--142.
\bibitem{Stukow15}
M. Stukow, {\it A finite presentation for the hyperelliptic mapping class group of a
nonorientable surface}, Osaka J.\ Math.\ {\bf 52} (2015), no. 2, 495--514.
\bibitem{Szepietowski10}
B. Szepietowski, {\it Embedding the braid group in mapping class groups}, Publ. Mat. {\bf
54} (2010), no. 2, 359--368.
\end{thebibliography}
\end{document}